
\documentclass[letter, 10pt, conference]{ieeeconf}
%
\IEEEoverridecommandlockouts      
\author{Shuang Gao and Peter E. Caines
\thanks{*This work is supported in part by NSERC (Canada), and the U.S. ARL and ARO grant W911NF1910110.}
\thanks{Shuang Gao and Peter E. Caines are with the Department of Electrical and Computer Engineering, McGill University,
  Montreal, QC, Canada. \hspace{1cm}
Email:        {\tt\small    $\{$sgao,peterc$\}$@cim.mcgill.ca}. }%
}


%

%
\usepackage[noadjust,sort,compress]{cite}

%

%
\usepackage{amsmath}
\usepackage[caption=false,font=footnotesize]{subfig}
\usepackage{url}



\usepackage{color}
\usepackage{graphicx}
\usepackage{epstopdf}

\usepackage{comment}
\includecomment{comment}
\specialcomment{notes}{\begingroup\sffamily\color{blue}}{\endgroup}


\newcommand{\BA}{\mathbb{A}}
\newcommand{\BB}{\mathbb{B}}
\newcommand{\BR}{\mathds{R}}  
\newcommand{\BI}{\mathbb{I}}

\newcommand{\FA}{\mathbf{A}}   
\newcommand{\FB}{\mathbf{B}}   
\newcommand{\Fu}{\mathbf{u}}	   
\newcommand{\Fx}{\mathbf{x}}	   
\newcommand{\Fv}{\mathbf{v}}	   
\newcommand{\I}{\mathbb{I}}			
\newcommand{\FQ}{\mathbf{Q}}			
\newcommand{\Ff}{\mathbf{f}}	   


\newcommand{\FP}{\mathbf{P}}	   
\newcommand{\FL}{\mathbf{L}}
\newcommand{\FM}{\mathbf{M}}


\newcommand{\SA}{\mathbf{A^{[N]}}}
\newcommand{\SB}{\mathbf{B^{[N]}}}

\newcommand{\Sxt}{\mathbf{x^{[N]}_t}}


\newcommand{\DSxt}{\mathbf{\dot{x}^{[N]}_t}}

\newcommand{\Sut}{\mathbf{u^{[N]}_t}}
\newcommand{\Chi}{\mathds{1}}


\newcommand{\GST}{\mathbf{\tilde{G}^{sp}}}

\newcommand{\GSOT}{\mathbf{
\tilde{G}_1^{sp}}}


\newcommand{\TJ}{\breve{J}}  

\usepackage{amssymb}



\hyphenation{op-tical net-works semi-conduc-tor}

\usepackage[T1]{fontenc}
\usepackage{dsfont}

\usepackage[standard,amsmath,thmmarks]{ntheorem}
\newtheorem{assumption}{Assumption}

\newcommand*\TRANS{{\mathpalette\doTRANS\empty}}
\makeatletter
\newcommand*\doTRANS[2]{\raisebox{\depth}{$\m@th#1\intercal$}}
\makeatother
\title{Optimal and Approximate Solutions to Linear Quadratic Regulation of a Class of Graphon Dynamical Systems}

\begin{document}
%

%
%
%
\maketitle

\begin{abstract}


In this paper we study the linear quadratic regulation (LQR) problem for dynamical systems coupled over large-scale networks and obtain  locally computable low-complexity solutions. The underlying large or even infinite networks are represented by graphons and the couplings appear in both the dynamics and the quadratic cost. 
The optimal solution is obtained first for graphon dynamical systems for the special case where the graphons are exactly characterized by finite spectral summands. 
The complexity of generating these control solutions involves solving $d+1$ scalar Riccati equations where $d$ is the number of non-zero eigenvalues in the spectral representation.
 Based on this, we provide a suboptimal low-complexity solution for problems with general graphon couplings via spectral approximations and analyze the performance under the approximate control.
Finally, a numerical example is given to illustrate the explicit solution and demonstrate the simplicity of the solution. 

\end{abstract}
\section{Introduction}
Complex networks of dynamical systems arise in many applications such as the Internet of Things, 5G communications, grid networks, social interactions, epidemic networks, and biological neuronal networks. There is an obvious need to analyze and control such networks~\cite{liu2011controllability,chu2017complex,nowzari2016analysis}.  
The study of controlling complex networks typically involves the control analysis such as controllability~\cite{liu2011controllability}, control energy~\cite{pasqualetti2014controllability}, input node selection~\cite{chen2017pinning},
and the low-complexity control synthesis problems with simplified objective (e.g. consensus \cite{olfati2007consensus} or synchronization \cite{arenas2008synchronization}), simplified control (e.g. pinning control \cite{chen2017pinning}, ensemble control \cite{li2011ensemble}), low-rank (e.g. mean field) coupling \cite{yong2013linear,arabneydi2016team,zecevic2005global}, or patterned coupling \cite{hamilton2012patterned}.

Graphon theory is developed to model large networks and graph limits \cite{lovasz2012large}. It has been applied to study dynamical models such as the heat equation and the coupled oscillator model  \cite{chiba2019mean,medvedev2014nonlinear,Kuehn2018PowerNetsGraphon}.
Graphon-based control has recently been proposed and developed to study control problems of large-scale network-coupled dynamical systems and generate low-complexity approximate control solutions to the otherwise intractable problems \cite{ShuangPeterCDC17, ShuangPeterCDC18}, which follows the spirit of mean field games \cite{HCM07}. It employs the graphon model and infinite dimensional system theory \cite{bensoussan2007representation} to represent systems on networks of arbitrary sizes.
Graphon couplings can  also be considered as the generalization of mean-field couplings.

In applications involving dynamical systems coupled over a large-scale network, it is natural that not only the states, but also
controls and costs are  coupled via the underlying structure given by the network.
This paper provides explicit optimal and approximate solutions to the control of a class of graphon dynamical systems where the graphon couplings may appear in both the dynamics and the cost function. 
 Furthermore, the proposed solution can be implemented in a distributed manner. The complexity of generating the optimal control depends on the number of nonzero eigenvalues of the underlying graphon. 

\subsection*{Notation}
We use $\FA^\TRANS$ to denote the adjoint operator of $\FA$. $\BR$ and $\BR_+$ denote the set of all real numbers and that of all positive reals respectively. Bold face letters (e.g. $\FA$, $\FB$, $\Fu$, $\GSOT$) are used to represent graphons, functions, or graphon spaces. Blackboard bold letters (e.g. $\BA$, $\BB$) are used to denote linear operators which are not necessarily compact. Let $\I$ denote the identity operator.  We use $\langle\cdot ,  \cdot \rangle$ and $\|\cdot \|$ to represent respectively inner product and norm.
In this paper, unless stated otherwise, the term "graphon" refers to a symmetric measurable function  ${\FA_1}: [0,1]^2\rightarrow [-1,1]$ and  $\bf{\tilde{G}}_1^{sp}$ denotes the set of all graphons.
For any $c\in \BR_+$, let $\GST_c$ denote the set of all bounded symmetric measurable functions $\FA: [0,1]^2 \rightarrow [-c,c]$. 
Clearly any $\FA \in \GST_c$ can be interpreted as a linear operator from $L^2_{[0,1]}$ to $L^2_{[0,1]}$ (see e.g. \cite{ShuangPeterTAC18}). $\mathcal{L}(L^2_{[0,1]})$ shall denote the set of all bounded linear operators from $L^2_{[0,1]}$ to $L^2_{[0,1]}$.
 $\mathcal{PO}(\FA)$ will denote the set of all bounded linear operators which are polynomials of the graphon operator $\FA$. Note that $\I$ is an element of $\mathcal{PO}(\FA)$.

\section{System Model: Non-compact Operator Formulation}

\subsection{Linear Graphon Dynamical  Systems}

Let $\BA = (\alpha_0 \I + \FA)$ with $\FA \in \GSOT$. Then $\BA$ is a bounded linear operator from $L^2_{[0,1]}$ to $L^2_{[0,1]}$ with the operator action defined as
$$
[\BA \Fv](\cdot) = \alpha_0 \Fv(\cdot) + \int_0^1\FA(\cdot, \eta) \Fv(\eta) d\eta, \quad \Fv \in L^2_{[0,1]}.
$$
 Following \cite{pazy1983semigroups}, $\BA$ is  the infinitesimal generator of the uniformly (hence strongly)  continuous semigroup 
 $S_\BA(t):=e^{\BA t}=\sum_{k=0}^{\infty} \frac{t^k\mathbf{\BA}^k}{k!}, ~ 0\leq t <\infty.$
Therefore, the initial value problem of the graphon  differential equation
\begin{equation}
	\mathbf{\dot{y}_t}={\BA\mathbf{y}_t},\quad \mathbf{y_0} \in L^2_{[0,1]} \label{equ:Noncompact-Operator-Differential-Equation}, \qquad  0\leq t <\infty,
\end{equation}
is well defined and has a  solution given by 
$
	\mathbf{y}_t=e^{{\BA}t}\mathbf{y}_0.
$

We formulate the graphon linear system $(\BA;\BB)$ as follows: 
\begin{equation} \label{equ:infinite-system-model}
	 \begin{aligned}
		&\mathbf{\dot{x}_t}= \BA \Fx_t + \BB \Fu_t,~~ t \in [0,T], 
		\end{aligned}
\end{equation}
where $\BA = (\alpha_0 \BI +\FA)$ with  $\FA \in \GSOT$ and $\alpha_0 \in \BR$, $\BB \in \mathcal{L}(L^2_{[0,1]})$, ${\Fx_t} \in L^2_{[0,1]}$ is the system state at time $t$, and ${\Fu_t} \in L^2_{[0,1]}$ is the control input at time $t$. 
We limit our discussions to graphons $\FA \in \GSOT$ purely for simplicity.  The generalization to functions $\FA\in \GST_c$ is immediate.

 Let $C([0,T];L^2_{[0,1]})$ denote the set of continuous mappings from $[0,T]$ to $L^2_{[0,1]}$ and further let $L^2 ([0, T]; L^2_{[0,1]} )$ denote the Banach space of equivalence classes
of strongly measurable (in the B\"ochner sense \cite[p.103]{showalter2013monotone}) mappings $\Fx:[0,T] \rightarrow  L^2_{[0,1]} $ that are integrable with the norm $\|\Fx\|_{L^2 ([0, T]; L^2_{[0,1]})}= (\int_0^T \int_0^1\Fx_{\tau}(\alpha)^2d\alpha d\tau )^{\frac12}$.
A  solution $\Fx \in L^2([0,T];L^2_{[0,1]})$  is called a {\it mild solution} of  (\ref{equ:infinite-system-model}) if 
$	\Fx_t=e^{(t-a){\BA}}{\Fx_a}+\int_a^te^{(t-s)\BA}\BB\Fu_sds$
for all $a\leq t$ in $[0,T]$.

\begin{proposition}
The system $(\BA; \BB)$ in
\eqref{equ:infinite-system-model} has a unique  mild solution $\mathbf{x}\in C([0,T];L^2_{[0,1]})$ for any $\mathbf{x_0} \in L^2_{[0,1]}$ and any $\mathbf{u}\in L^2([0,T]; L^2_{[0,1]})$.
\end{proposition}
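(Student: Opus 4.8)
The plan is to verify that the variation-of-constants formula
\[
\Fx_t = e^{t\BA}\Fx_0 + \int_0^t e^{(t-s)\BA}\BB\Fu_s\,ds, \qquad t\in[0,T],
\]
defines an element of $C([0,T];L^2_{[0,1]})$ and that it is the unique mild solution. The key structural fact, already observed in the excerpt, is that $\BA=\alpha_0\I+\FA$ is a \emph{bounded} operator (because $\FA\in\GSOT$ acts boundedly on $L^2_{[0,1]}$), so by standard semigroup theory \cite{pazy1983semigroups} the family $S_\BA(t)=e^{t\BA}$ is a uniformly continuous semigroup with $\|e^{t\BA}\|\le e^{\|\BA\|t}$; in particular $M:=\sup_{0\le t\le T}\|e^{t\BA}\|=e^{\|\BA\|T}<\infty$. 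This places the problem in the simplest setting of linear evolution theory, so no delicate domain considerations are needed.

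First I would establish that the formula is well defined and continuous. The homogeneous term $t\mapsto e^{t\BA}\Fx_0$ is continuous on $[0,T]$ by (norm, hence strong) continuity of the semigroup. For the convolution term, the integrand $s\mapsto e^{(t-s)\BA}\BB\Fu_s$ is strongly measurable, being the product of the norm-continuous operator-valued map $s\mapsto e^{(t-s)\BA}$, the bounded operator $\BB\in\mathcal{L}(L^2_{[0,1]})$, and the strongly measurable map $\Fu$; moreover its norm is bounded by $M\|\BB\|\,\|\Fu_s\|$, which is integrable on $[0,T]$ since $\int_0^T\|\Fu_s\|\,ds\le\sqrt{T}\,\|\Fu\|_{L^2([0,T];L^2_{[0,1]})}<\infty$. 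Hence the B\"ochner integral exists in $L^2_{[0,1]}$ for each $t$. For continuity in $t$, given $0\le t\le t'\le T$ I would write the increment of the convolution term as $\int_0^t\big(e^{(t'-t)\BA}-\I\big)e^{(t-s)\BA}\BB\Fu_s\,ds+\int_t^{t'}e^{(t'-s)\BA}\BB\Fu_s\,ds$; the first summand is bounded in norm by $\|e^{(t'-t)\BA}-\I\|\cdot M\|\BB\|\int_0^T\|\Fu_s\|\,ds$, which tends to $0$ as $t'\to t$ by norm continuity of the semigroup, and the second by $M\|\BB\|\int_t^{t'}\|\Fu_s\|\,ds$, which tends to $0$ by absolute continuity of the Lebesgue integral. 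Thus $\Fx\in C([0,T];L^2_{[0,1]})$.

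Next I would check that this $\Fx$ indeed satisfies the mild-solution identity $\Fx_t=e^{(t-a)\BA}\Fx_a+\int_a^t e^{(t-s)\BA}\BB\Fu_s\,ds$ for every $0\le a\le t\le T$, not merely for $a=0$: substituting the formula for $\Fx_a$, moving the bounded operator $e^{(t-a)\BA}$ inside the B\"ochner integral, and using the semigroup laws $e^{(t-a)\BA}e^{a\BA}=e^{t\BA}$ and $e^{(t-a)\BA}e^{(a-s)\BA}=e^{(t-s)\BA}$ together with $\int_0^a+\int_a^t=\int_0^t$ collapses the right-hand side to $\Fx_t$. Uniqueness then follows at once: any mild solution must in particular satisfy the identity with $a=0$, which is exactly the explicit formula above, so any two mild solutions agree in $L^2_{[0,1]}$ at every $t\in[0,T]$. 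I do not anticipate a real obstacle, precisely because $\BA$ is bounded; the only points demanding care are the strong measurability and norm domination that justify the B\"ochner integral, and the dominated-convergence / absolute-continuity argument for continuity of the convolution term — the remainder is routine manipulation with a uniformly continuous semigroup.
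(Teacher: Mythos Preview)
Your argument is correct and follows the standard semigroup route that the paper invokes: the paper's own proof is a one-line appeal to \cite[p.~385]{bensoussan2007representation}, noting only that $\BA$ generates a strongly continuous semigroup and $\BB$ is bounded. You have simply unpacked the content of that citation---existence and continuity of the variation-of-constants formula via B\"ochner integrability and uniform continuity of the semigroup, plus uniqueness from the $a=0$ identity---so the approaches coincide, with yours being self-contained rather than deferred to the literature.
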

\begin{proof}
Since $\BA$ generates a strongly continuous semigroup and $\BB$ is a bounded linear operator on $L^2_{[0,1]}$, we obtain this result following \cite[p.385]{bensoussan2007representation}.

\end{proof}

\subsection{Relation to Finite Network Systems}


Consider an interlinked network of linear (symmetric) dynamical 
 subsystems $\{S_i^N; 1\leq i \leq N \}$. 
 The subsystem $S_i^N$ at the node $i$ in the undirected weighted graph $G_N$ has  interactions with $S_j^N, 1\leq j \leq N,$ specified as below:
 \vspace{-0.2cm}
 \begin{equation} \label{equ:network-system}
 	\begin{aligned}
 	 &\dot{x}^i_t= \alpha_0 x_t^i+ \frac{1}{N}\sum_{j=1}^{N} {a}_{ij} x^j_t+\beta_0 u_t^i +\frac1{N} \sum_{j=1}^N {b}{_{ij}} u^j_t, \quad \\
 	& t \in [0,T],\quad \alpha_0, \beta_0 \in \BR, \quad  x^i_t, u^i_t \in \BR,
 	\end{aligned}
 \end{equation}  
  where ${A}_{N}= [{a}_{ij}]$ and $ {B}_{N} =[{b}_{ij}] \in \BR^{N\times N}$ are the symmetric  adjacency matrices of $G_N$ and  of the input graph with bounded elements, say, $ |a_{ij}|, |b_{ij}| \leq 1$. {This bound can be generalized to a constant $c\in\BR_+$ if we work in $\GST_c$.}  
  For simplicity, the scalar state for each subsystem is considered here and this easily generalizes to vector state cases.  
   Let $x_t=[x^1_t, \dots, x^N_t]^\TRANS$ and $u_t=[u^1_t, \dots, u^N_t]^\TRANS$.

Consider a uniform partition $\{P_1, \ldots,P_N\}$ of $[0,1]$.  
Let the step function graphon $\SA$ that corresponds to $A_N$ be given by 
\begin{equation*}
	\SA(\vartheta,\varphi) = \sum_{i=1}^{N} \sum_{j=1}^{N} \Chi_{_{P_i}}(\vartheta)\Chi_{_{P_j}}(\varphi)a_{ij},  \quad (\vartheta,\varphi) \in [0,1]^2,
\end{equation*}
where $\Chi_{_{P_i}}(\cdot)$ represents the indicator function, that is, $\Chi_{_{P_i}}(\vartheta)=1$ if $\vartheta\in P_i$ and $\Chi_{_{P_i}}(\vartheta)=0$ if $\vartheta\notin P_i$. Similarly, define $\SB$ based on $B_N$.
Let the piece-wise constant function $\Sxt \in L^2_{[0,1]}$ corresponding to $x_t \in \BR^N$ be given by $\Sxt (\vartheta) =\sum_{i=1}^N {\Chi}_{_{P_i}}(\vartheta) x_t^i$, for all $\vartheta \in [0,1].$ 
Similarly define $\Sut \in L^2_{[0,1]}$ that corresponds to $u_t\in \BR^N$.

Then the corresponding graphon dynamical system for the network system in \eqref{equ:network-system} is given by 
\begin{equation}
	\begin{aligned} \label{equ:step-function-dynamical-system}
	&\DSxt= (\alpha_0 \BI+\SA) \Sxt+(\beta_0 \BI+\SB) \Sut, \quad t\in[0,T],\\
	&\alpha_0, \beta_0 \in \BR,   \quad \Sxt, \Sut \in L^2_{pwc}{_{[0,1]}},\quad  \SA, \SB \in \GSOT
	\end{aligned}
\end{equation}
 where $L^2_{pwc}{_{[0,1]}}$ represents the set of all piece-wise constant functions in $L^2_{[0,1]}$.

 The trajectories of the graphon dynamical system in \eqref{equ:step-function-dynamical-system} correspond one-to-one to the trajectories of the network system in \eqref{equ:network-system}. 
Moreover, the system in \eqref{equ:infinite-system-model} can represent the limit system for a sequence of systems represented in the form of \eqref{equ:step-function-dynamical-system} when the underlying step function graphon sequences convergence in the $L^2_{[0,1]^2}$ metric~\cite{ShuangPeterTAC18}.

\section{Optimal Control Problem} \label{sec:ocp}

\subsection{Control Objective}
Consider the instantaneous cost 
$
c_t(\Fu_t,\Fx_t) = \langle \Fx_t, \FQ \Fx_t \rangle+ \langle \Fu_t, \Fu_t\rangle,
$
and the terminal cost
$
	c_T(\Fx_T) = \langle \Fx_T, \FP_0 \Fx_T \rangle. 
$ 
The control objective is to obtain the control law $\Fu \in L^2([0,T]; L^2_{[0,1]})$ that minimizes the quadratic cost
\begin{equation}\label{equ:original-cost-function}
	J(\Fu) =\int_0^T c_t(\Fx_t, \Fu_t) dt + c_T(\Fx_T),
\end{equation}
 subject to the system dynamics in \eqref{equ:infinite-system-model} over the finite time horizon $[0, T]$.

\subsection{Existence and Uniqueness of Optimal Solutions}
Consider the following Riccati equation
\begin{equation}\label{equ: noncompact-Riccati-Equation}
	\dot{\FP}=\BA^\TRANS \FP+\FP \BA- \FP\BB\BB^\TRANS \FP + \FQ, \quad \FP(0)= \FP_0.
\end{equation}
Given the solution $\FP$ to the Riccati equation, the optimal control $\Fu^*:=\{\Fu^*_t, t\in[0,T]\}$ is given by 
\begin{equation} \label{equ: noncompact-feedback-control-law}
	\Fu^*_t=-\BB^\TRANS \FP(T-t) \Fx^*_t, \quad t\in [0, T]
\end{equation}
and moreover $\Fx^*:=\{\Fx^*_t, t\in[0,T]\}$ is the solution to the closed loop equation 
\begin{equation} \label{equ: Closed-Loop-System-Non-Compact}
	\begin{aligned}
	& \dot{\Fx}_t=\big(\BA- \BB \BB^\TRANS \FP(T-t)\big) \Fx_t, 
	& t\in [0,T], \Fx_0 \in L^2_{[0,1]}.
	\end{aligned}
\end{equation}
See \cite{bensoussan2007representation} for more details. 

\begin{assumption} \label{ass:graphon-team-2}
	The linear operators $\FQ$ and $\FP_0$ on $L^2_{[0,1]}$ are Hermitian and non-negative, i.e., $\FQ,\FP_0 \geq 0$.
\end{assumption}

\begin{proposition}[{{\cite[p.385]{bensoussan2007representation}}}] 
\label{prop:existence-unique-opt-sol}
	Under  \textbf{Assumption \ref{ass:graphon-team-2}}, there exists a unique solution to the Riccati equation  \eqref{equ: noncompact-Riccati-Equation} and furthermore there exists a unique optimal solution pair $(\Fu^*, \Fx^*)$ as given in \eqref{equ: noncompact-feedback-control-law} and \eqref{equ: Closed-Loop-System-Non-Compact}.
\end{proposition}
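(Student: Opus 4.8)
The plan is to confirm that minimizing the cost \eqref{equ:original-cost-function} subject to the dynamics \eqref{equ:infinite-system-model} is an instance of the standard finite-horizon linear-quadratic problem on a Hilbert space, and then to invoke the corresponding well-posedness theory of \cite{bensoussan2007representation}. First I would assemble the structural facts already recorded in the excerpt: $\BA = \alpha_0\I + \FA$ is a \emph{bounded} operator on $L^2_{[0,1]}$ and hence generates the uniformly (a fortiori strongly) continuous semigroup $e^{\BA t}$; the input operator $\BB \in \mathcal{L}(L^2_{[0,1]})$ is bounded; and by \textbf{Assumption~\ref{ass:graphon-team-2}} the state weight $\FQ$ and terminal weight $\FP_0$ are bounded, self-adjoint and nonnegative, while the control weight in $c_t$ is the identity and therefore coercive. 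These are exactly the hypotheses under which the operator Riccati differential equation \eqref{equ: noncompact-Riccati-Equation} is known to be well posed.

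For the Riccati equation I would argue well-posedness in $C([0,T];\mathcal{L}(L^2_{[0,1]}))$ in the usual two stages. Local existence and uniqueness on a short interval follow from a contraction-mapping argument applied to the mild (integral) form of \eqref{equ: noncompact-Riccati-Equation}, using that $\FP \mapsto \FP\BB\BB^\TRANS\FP + \FQ$ is Lipschitz on bounded subsets of $\mathcal{L}(L^2_{[0,1]})$ and that $e^{\BA t}$ is uniformly bounded on $[0,T]$. To extend the local solution to all of $[0,T]$ one needs an a priori bound on $\|\FP(t)\|$; this is furnished by the variational characterization of $\FP$, namely $0 \le \langle \FP(t)\Fv,\Fv\rangle \le J_t(0;\Fv)$ for all $\Fv \in L^2_{[0,1]}$, where $J_t(0;\Fv)$ denotes the cost of applying the zero control over a horizon of length $t$ starting from $\Fv$. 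The upper quantity is finite and bounded uniformly in $t \in [0,T]$ because $e^{\BA s}$, $\FQ$ and $\FP_0$ are all bounded on $[0,T]$, and the lower bound holds by \textbf{Assumption~\ref{ass:graphon-team-2}}; hence no finite-time blow-up occurs and the solution extends to $[0,T]$, with $\FP(t) = \FP(t)^\TRANS \ge 0$ propagated along the flow.

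Given this unique nonnegative $\FP$, the gain $t \mapsto \BB^\TRANS\FP(T-t)$ is a norm-continuous family of bounded operators, so the closed-loop generator $\BA - \BB\BB^\TRANS\FP(T-t)$ in \eqref{equ: Closed-Loop-System-Non-Compact} is a continuous bounded perturbation of $\BA$; standard results for nonautonomous linear evolution equations then yield a unique (mild, and in fact classical) solution $\Fx^*$ for every $\Fx_0 \in L^2_{[0,1]}$, whence $\Fu^* = -\BB^\TRANS\FP(T-\cdot)\Fx^* \in L^2([0,T];L^2_{[0,1]})$. That $(\Fu^*,\Fx^*)$ is the unique optimizer follows from the completion-of-squares identity $J(\Fu) = \langle \Fx_0, \FP(T)\Fx_0\rangle + \int_0^T \|\Fu_t + \BB^\TRANS\FP(T-t)\Fx_t\|^2\, dt$, valid for every admissible $\Fu$ with associated state $\Fx$: the right-hand side is minimized exactly when the integrand vanishes, i.e. by the feedback \eqref{equ: noncompact-feedback-control-law}, and strict convexity of $J$ in $\Fu$ (the control weight being $\I > 0$) forces uniqueness of the minimizer.

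The only step carrying genuine content beyond bookkeeping is the a priori bound that upgrades local to global solvability of the Riccati equation; the rest is either already in the excerpt or a direct transcription of the Hilbert-space LQR argument, which is why the proposition is legitimately closed in the paper by citing \cite[p.385]{bensoussan2007representation} once the hypotheses listed above have been verified.
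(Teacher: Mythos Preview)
Your proposal is correct and in fact more detailed than what the paper itself supplies: the paper gives no proof of this proposition beyond the citation \cite[p.385]{bensoussan2007representation}, treating it as a direct consequence of the standard Hilbert-space LQR theory once the structural hypotheses (bounded generator $\BA$, bounded $\BB$, nonnegative self-adjoint $\FQ$ and $\FP_0$, coercive control weight $\I$) are in place. Your sketch is a faithful reconstruction of precisely that argument---local Picard existence for the mild Riccati equation, the variational a priori bound $0\le\langle\FP(t)\Fv,\Fv\rangle\le J_t(0;\Fv)$ to rule out blow-up, well-posedness of the time-varying closed loop, and completion of squares for optimality and uniqueness---and your closing remark correctly identifies that the only substantive step is the a priori estimate; everything else is bookkeeping against the hypotheses already verified in the excerpt.
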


\begin{assumption} \label{ass:graphon-team-3}
	The graphon $\FA$ as an operator has a finite  number $d$ of eigenfunctions corresponding to the finite set of non-zero eigenvalues. That is, 
	\begin{equation} 
	\FA(x,y) = \sum_{\ell=1}^{d} \lambda_\ell \Ff_\ell(x) \Ff_\ell(y), \quad (x,y)\in[0,1]^2.
\end{equation}
\end{assumption}

As an operator any graphon is compact and hence its eigenvalues accumulate at zero \cite{lovasz2012large}. Thus the above assumption corresponds to an reasonable approximation. See Section \ref{sec:graphon-approx} and \cite{ShuangPeterCDC19W1} for detailed discussions on graphon approximations. 
\begin{assumption} \label{ass:system-operator-B}
	$\BB $ is in $ \mathcal{PO}(\FA)$ and it is given by
$
	\BB =\text{\normalfont poly}_\FB(\FA) := \sum_{k=0}^{b_L} \beta_k\FA^k, ~ b_L\geq0.
$
\end{assumption}

\begin{assumption} \label{ass:graphon-team-4}
$\FQ$ and $\FP_0$ are in $\mathcal{PO}(\FA)$, represented by
$\FQ  = \text{\normalfont poly}_\FQ(\FA):= \sum_{k=0}^h q_k\FA^k,~ h\geq0$ and $\FP_0 = \text{\normalfont poly}_{\FP_0}(\FA):=  \sum_{k=0}^{r}z_k\FA^k, ~ r\geq0$.
\end{assumption}

For systems coupled over a graphon, it is reasonable or even desirable in some applications that controls or costs are also coupled via the underlying structure given by the graphon. Notice that \textbf{Assumptions \ref{ass:system-operator-B}-\ref{ass:graphon-team-4}} include the cases with decoupled costs and controls. 

For any $ s\in \BR$, $\text{\normalfont poly}_\FB(s):= \sum_{k=0}^{b_L} \beta_k s^k;$ similar definitions hold for $\text{\normalfont poly}_\FQ(s)$ and $\text{\normalfont poly}_{\FP_0}(s)$.

\section{Optimal Solutions via  Decoupling}
The projections of $\Fx_t$ and $\Fu_t$ in the eigenspace spanned by the normalized eigenfunction $\Ff_\ell \in L^2_{[0,1]}$ are respectively given by
$
  	\bar{\Fx}_t^\ell = \langle \Fx_t , \Ff_\ell \rangle \Ff_\ell \in L^2_{[0,1]} ~ \text{ and }~ \bar{\Fu}_t^\ell = \langle \Fu_t , \Ff_\ell \rangle \Ff_\ell \in L^2_{[0,1]}.
  $
   We call $\bar{\Fx}_t^\ell$ and $\bar{\Fu}_t^\ell$ eigenstate and eigencontrol, respectively.
 The  values for the respective inner products are denoted by
 \begin{equation}\label{eq:scalar-inner-product}
 	\bar{x}_t^\ell=\langle \Fx_t, \Ff_\ell \rangle \in \BR ~~~\text{and}~~~ \bar{u}_t^\ell=\langle \Fu_t, \Ff_\ell \rangle \in \BR.
 \end{equation}
To orthogonally decouple the optimal control problem, an auxiliary state and an auxiliary control are introduced as follows: 
\begin{equation}\label{equ:aux-state-control}
	\breve{\Fx}_t= \Fx_t -\sum_{\ell=1}^{d} \bar{\Fx}_t^\ell \quad \text{and} \quad \breve{\Fu}_t= \Fu_t -\sum_{\ell=1}^{d} \bar{\Fu}_t^\ell.
\end{equation}
\subsection{Decoupled Dynamics}

\begin{lemma}\label{lem:decoupling-dynamics}
Under \textbf{Assumptions \ref{ass:graphon-team-3}} and \textbf{\ref{ass:system-operator-B}}, the original system dynamics in \eqref{equ:infinite-system-model} can be uniquely decoupled into
the auxiliary system dynamics given by 
\begin{equation}\label{equ:aux-system}
	\dot{\breve{\Fx}}_t= \alpha_0 \breve{\Fx}_t+ \beta_0 \breve{\Fu}_t
\end{equation}
and the eigensystem dynamics given by
	\begin{equation}\label{equ:eigen-system}
	\dot{\bar{\Fx}}_t^\ell = (\alpha_0 + \lambda_\ell) \bar{\Fx}^\ell_t + \text{\normalfont poly}_{\FB}(\lambda_\ell) \bar{\Fu}_t^\ell,\quad 1\leq \ell \leq d. 
\end{equation}
\end{lemma}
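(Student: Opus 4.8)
The plan is to decompose the state $\Fx_t$ into its projection onto the $d$-dimensional eigenspace of $\FA$ associated with the non-zero eigenvalues $\{\lambda_\ell\}_{\ell=1}^d$ and the orthogonal complement, and then to apply the operator $\BA$ and $\BB$ termwise. First I would note that, since the $\Ff_\ell$ are orthonormal eigenfunctions of $\FA$, we have $\FA \Ff_\ell = \lambda_\ell \Ff_\ell$ and, because $\FA$ kills the orthogonal complement of $\mathrm{span}\{\Ff_1,\dots,\Ff_d\}$ (by \textbf{Assumption \ref{ass:graphon-team-3}}), $\FA \breve{\Fx}_t = 0$. Hence $\BA \Fx_t = (\alpha_0 \BI + \FA)\big(\breve{\Fx}_t + \sum_{\ell=1}^d \bar{\Fx}_t^\ell\big) = \alpha_0 \breve{\Fx}_t + \sum_{\ell=1}^d (\alpha_0 + \lambda_\ell) \bar{\Fx}_t^\ell$, using that $\BA$ acts invariantly on each one-dimensional eigenspace and on the complement. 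The same reasoning applies to every power $\FA^k$: $\FA^k \Ff_\ell = \lambda_\ell^k \Ff_\ell$ and $\FA^k \breve{\Fx}_t = 0$ for $k\geq 1$, so for $\BB = \mathrm{poly}_\FB(\FA) = \sum_{k=0}^{b_L}\beta_k \FA^k$ we get $\BB \Fu_t = \beta_0 \breve{\Fu}_t + \sum_{\ell=1}^d \mathrm{poly}_\FB(\lambda_\ell)\bar{\Fu}_t^\ell$, where the coefficient $\beta_0$ arises because on $\breve{\Fu}_t$ only the $k=0$ term of the polynomial survives.

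Next I would substitute these two expansions into the dynamics $\dot{\Fx}_t = \BA\Fx_t + \BB\Fu_t$ and split the resulting identity according to the orthogonal direct sum $L^2_{[0,1]} = \bigoplus_{\ell=1}^d \mathrm{span}\{\Ff_\ell\} \oplus (\mathrm{span}\{\Ff_1,\dots,\Ff_d\})^\perp$. Since this decomposition is orthogonal and time-invariant, differentiation commutes with projection, so $\dot{\Fx}_t = \dot{\breve{\Fx}}_t + \sum_{\ell=1}^d \dot{\bar{\Fx}}_t^\ell$ with $\dot{\bar{\Fx}}_t^\ell$ lying in $\mathrm{span}\{\Ff_\ell\}$ and $\dot{\breve{\Fx}}_t$ in the complement. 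Matching the component in $\mathrm{span}\{\Ff_\ell\}$ on both sides yields $\dot{\bar{\Fx}}_t^\ell = (\alpha_0 + \lambda_\ell)\bar{\Fx}_t^\ell + \mathrm{poly}_\FB(\lambda_\ell)\bar{\Fu}_t^\ell$, which is \eqref{equ:eigen-system}, and matching the component in the orthogonal complement yields $\dot{\breve{\Fx}}_t = \alpha_0 \breve{\Fx}_t + \beta_0 \breve{\Fu}_t$, which is \eqref{equ:aux-system}. Uniqueness of the decoupling follows because the orthogonal projections onto the eigenspaces and their complement are uniquely determined, so the splitting of $\Fx_t$ and of the dynamics is forced.

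The one point that needs a little care — and which I expect to be the main (though minor) obstacle — is the bookkeeping with the constant term $\beta_0 \BI$ versus $\mathrm{poly}_\FB(\lambda_\ell)$: one must observe that $\mathrm{poly}_\FB(0) = \beta_0$, so the auxiliary subsystem is exactly the "$\lambda = 0$ instance" of the eigensystem equation, consistent with the fact that the orthogonal complement is the eigenspace for eigenvalue $0$. A secondary technical point is ensuring that the termwise action of $\BB$ on the infinite-dimensional space is legitimate; this is immediate since $\BB \in \mathcal{PO}(\FA)$ is a bounded operator (a finite polynomial in the bounded operator $\FA$) and the eigenfunction expansion converges in $L^2$, so applying $\BB$ commutes with the (finite) sum over $\ell$ and with the residual term. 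No regularity issues beyond those already guaranteed by Proposition 1 arise, since $\breve{\Fx}$ and $\bar{\Fx}^\ell$ inherit continuity in $t$ from $\Fx \in C([0,T];L^2_{[0,1]})$ via the bounded projection operators.
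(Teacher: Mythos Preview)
Your proposal is correct and is essentially the same argument as the paper's: both obtain \eqref{equ:eigen-system} by projecting the dynamics onto each eigendirection $\Ff_\ell$ and then recover \eqref{equ:aux-system} as the residual in the orthogonal complement. Your write-up is simply more explicit about why $\FA$ annihilates $\breve{\Fx}_t$ and why $\BB$ restricts to $\mathrm{poly}_\FB(\lambda_\ell)$ on $\mathrm{span}\{\Ff_\ell\}$, whereas the paper compresses this into ``project, then subtract.''
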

\begin{proof}
By projecting both sides of \eqref{equ:infinite-system-model} into the direction  $\Ff_\ell$, we obtain \eqref{equ:eigen-system}. Then by subtracting \eqref{equ:eigen-system} for all $\ell$, $1\leq \ell \leq d$ from \eqref{equ:infinite-system-model} according to the definitions of the auxiliary state and the auxiliary control in \eqref{equ:aux-state-control}, we obtain \eqref{equ:aux-system}.

\end{proof}
\subsection{Decoupled Costs}
\begin{lemma} \label{lem:seperation-in-quadratic-functions}
	Under \textbf{Assumption \ref{ass:graphon-team-3}} and the assumption $\FQ \in \mathcal{PO}(\FA)$,  the following decoupling holds
	\begin{equation}\label{equ:cost-seperation}
		\begin{aligned}
			\langle \Fx_t, \FQ \Fx_t \rangle = \langle  \breve{\Fx}_t, \FQ \breve{\Fx}_t \rangle+  \sum_{\ell=1}^d \langle  {\bar{\Fx}}^\ell_t, \FQ \bar{\Fx}^\ell\rangle.
		\end{aligned}
	\end{equation}
	Furthermore, if  $\FQ = \text{\normalfont poly}_\FQ(\FA):=\sum_{k=0}^h q_k \FA^k$, then
	\begin{equation}\label{equ:cost-seperation-eigenvalues}
		\begin{aligned}
			\langle \Fx_t, \FQ \Fx_t \rangle =   q_0 \|\breve{\Fx}_t\|_2^2 +  \sum_{\ell=1}^d \text{\normalfont poly}_\FQ(\lambda_\ell) \|\bar{\Fx}_t^\ell\|_2^2.
		\end{aligned}
	\end{equation}
\end{lemma}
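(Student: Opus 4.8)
The plan is to reduce everything to two structural facts: the decomposition $\Fx_t = \breve{\Fx}_t + \sum_{\ell=1}^d \bar{\Fx}_t^\ell$ introduced in \eqref{equ:aux-state-control} is \emph{orthogonal} in $L^2_{[0,1]}$, and every operator in $\mathcal{PO}(\FA)$ acts as a scalar on each summand of this decomposition. First I would record the orthogonality: since the normalized eigenfunctions $\Ff_1,\dots,\Ff_d$ are orthonormal, $\langle \bar{\Fx}_t^\ell, \bar{\Fx}_t^m\rangle = 0$ whenever $\ell\neq m$, and a one-line computation gives $\langle \breve{\Fx}_t, \Ff_m\rangle = \langle \Fx_t, \Ff_m\rangle - \sum_{\ell=1}^d \langle \Fx_t,\Ff_\ell\rangle\langle \Ff_\ell,\Ff_m\rangle = 0$, so $\breve{\Fx}_t$ is orthogonal to every $\bar{\Fx}_t^\ell$.

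Next I would describe the action of $\FQ = \sum_{k=0}^h q_k \FA^k$ (any element of $\mathcal{PO}(\FA)$ has this form). From $\FA\Ff_\ell = \lambda_\ell\Ff_\ell$ we get $\FA^k\Ff_\ell = \lambda_\ell^k\Ff_\ell$, hence $\FQ\bar{\Fx}_t^\ell = \text{\normalfont poly}_\FQ(\lambda_\ell)\,\bar{\Fx}_t^\ell$. For the auxiliary part the key is Assumption \ref{ass:graphon-team-3}: because $\FA(x,y) = \sum_{\ell=1}^d \lambda_\ell \Ff_\ell(x)\Ff_\ell(y)$, the operator acts by $\FA\Fv = \sum_{\ell=1}^d \lambda_\ell\langle \Fv,\Ff_\ell\rangle\Ff_\ell$, so $\FA\breve{\Fx}_t = 0$ by the orthogonality just established; therefore $\FA^k\breve{\Fx}_t = 0$ for all $k\geq 1$ and $\FQ\breve{\Fx}_t = q_0\breve{\Fx}_t$. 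This is precisely where finiteness of the spectrum enters — for a general compact graphon the orthogonal complement of $\operatorname{span}\{\Ff_1,\dots,\Ff_d\}$ would not be annihilated by $\FA$.

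Given these two ingredients, \eqref{equ:cost-seperation} follows by expanding $\langle \Fx_t,\FQ\Fx_t\rangle$ bilinearly over the orthogonal decomposition: the diagonal terms give $\langle \breve{\Fx}_t,\FQ\breve{\Fx}_t\rangle$ and $\sum_{\ell=1}^d\langle \bar{\Fx}_t^\ell,\FQ\bar{\Fx}_t^\ell\rangle$, while every cross term vanishes — $\langle \bar{\Fx}_t^\ell,\FQ\bar{\Fx}_t^m\rangle = \text{\normalfont poly}_\FQ(\lambda_m)\langle \bar{\Fx}_t^\ell,\bar{\Fx}_t^m\rangle = 0$ for $\ell\neq m$, and the mixed terms $\langle \breve{\Fx}_t,\FQ\bar{\Fx}_t^m\rangle$, $\langle \bar{\Fx}_t^\ell,\FQ\breve{\Fx}_t\rangle$ are scalar multiples of inner products between $\breve{\Fx}_t$ and an eigen-projection, hence zero. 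For \eqref{equ:cost-seperation-eigenvalues} I substitute the scalar actions $\FQ\breve{\Fx}_t = q_0\breve{\Fx}_t$ and $\FQ\bar{\Fx}_t^\ell = \text{\normalfont poly}_\FQ(\lambda_\ell)\bar{\Fx}_t^\ell$ into the right-hand side of \eqref{equ:cost-seperation}. The argument is essentially bookkeeping; the only place needing care — and the step I would flag as the main obstacle — is verifying that all cross terms vanish, which rests squarely on the orthogonality of the decomposition together with the fact that $\FA$, and therefore any polynomial in it, leaves each eigen-direction and the auxiliary subspace invariant.
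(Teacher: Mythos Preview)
Your proposal is correct and follows essentially the same approach as the paper's proof: both exploit the orthogonal decomposition $\Fx_t=\breve{\Fx}_t+\sum_\ell \bar{\Fx}_t^\ell$ together with the fact that $\FA$ (and hence any polynomial in $\FA$) acts as a scalar on each summand, so that all cross terms vanish. The only cosmetic difference is that the paper first verifies the identity for each power $\FA^k$ and then sums, whereas you work with $\FQ$ directly; the substance is identical.
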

\vspace{-0.3cm}
\begin{proof}
	See Appendix \ref{apx:proof-lemma-1}.
\end{proof}

\begin{lemma} \label{lem:decoupling-in-cost}
If \textbf{Assumptions \ref{ass:graphon-team-3}} and \textbf{\ref{ass:graphon-team-4}}  hold, then the instantaneous cost and the terminal cost can be decoupled as follows:
\vspace{-0.3cm}
\begin{align*}
	&c_t(\Fu_t,\Fx_t)=\breve{c}_t(\breve{\Fu}_t,\breve{\Fx}_t)+ \sum_{\ell=1}^d \bar{c}^\ell_t(\bar{\Fu}^\ell_t,\bar{\Fx}^\ell_t),\\
&c_T(\Fx_t)=\breve{c}_T(\breve{\Fx}_t)+ \sum_{\ell=1}^d \bar{c}^\ell_T(\bar{\Fx}^\ell_t),
\end{align*}
 where $$
\begin{aligned}
	& \bar{c}^\ell_t(\bar{\Fu}^\ell_t,\bar{\Fx}^\ell_t)= \text{\normalfont poly}_\FQ(\lambda_\ell) \|\bar{\Fx}_t^\ell\|_2^2+ \|\bar{\Fu}^\ell_t\|_2^2, \\[0.5em]
	& \breve{c}_t(\breve{\Fu}_t,\breve{\Fx}_t)=  q_0\|{\breve{\Fx}}_t\|_2^2 +\|\breve{\Fu}_t\|_2^2,\\[0.5em]
	& \bar{c}^\ell_T(\bar{\Fx}^\ell_T) = \text{\normalfont poly}_{\FP_0}(\lambda_\ell)  \|\bar{\Fx}^\ell_T\|_2^2,  \quad \text{ and } \quad  \breve{c}_T(\breve{\Fx}_T) = z_0 \|\breve{\Fx}_T\|_2^2.
\end{aligned} $$
~
\end{lemma}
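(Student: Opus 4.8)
The plan is to reduce both claims to Lemma~\ref{lem:seperation-in-quadratic-functions} together with one elementary orthogonality fact about the control. The state-dependent pieces of the two costs are handled directly: under Assumption~\ref{ass:graphon-team-4} both $\FQ$ and $\FP_0$ belong to $\mathcal{PO}(\FA)$, so identity~\eqref{equ:cost-seperation-eigenvalues} of Lemma~\ref{lem:seperation-in-quadratic-functions} applies to each of them. Writing $\FQ=\text{\normalfont poly}_\FQ(\FA)$ yields
\[
	\langle \Fx_t,\FQ\Fx_t\rangle = q_0\|\breve{\Fx}_t\|_2^2+\sum_{\ell=1}^d \text{\normalfont poly}_\FQ(\lambda_\ell)\|\bar{\Fx}_t^\ell\|_2^2,
\]
and the same identity evaluated at $t=T$ with $\FP_0=\text{\normalfont poly}_{\FP_0}(\FA)$ yields
\[
	\langle \Fx_T,\FP_0\Fx_T\rangle = z_0\|\breve{\Fx}_T\|_2^2+\sum_{\ell=1}^d \text{\normalfont poly}_{\FP_0}(\lambda_\ell)\|\bar{\Fx}_T^\ell\|_2^2.
\]

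For the control penalty $\langle\Fu_t,\Fu_t\rangle$ I would invoke the Pythagorean theorem for the decomposition $\Fu_t=\breve{\Fu}_t+\sum_{\ell=1}^d\bar{\Fu}_t^\ell$ of~\eqref{equ:aux-state-control}. Since the eigenfunctions $\{\Ff_\ell\}_{\ell=1}^d$ are orthonormal, the eigencontrols $\bar{\Fu}_t^\ell=\langle\Fu_t,\Ff_\ell\rangle\Ff_\ell$ are pairwise orthogonal, and $\breve{\Fu}_t$, being by construction the component of $\Fu_t$ in the orthogonal complement of $\operatorname{span}\{\Ff_1,\dots,\Ff_d\}$, is orthogonal to each $\bar{\Fu}_t^\ell$; hence all cross terms vanish and $\|\Fu_t\|_2^2=\|\breve{\Fu}_t\|_2^2+\sum_{\ell=1}^d\|\bar{\Fu}_t^\ell\|_2^2$. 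No analogous step is needed for the terminal cost, which carries no control term.

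Adding the state and control decompositions and regrouping the summands indexed by $\ell$ then produces exactly the stated form, with $\bar{c}_t^\ell(\bar{\Fu}_t^\ell,\bar{\Fx}_t^\ell)=\text{\normalfont poly}_\FQ(\lambda_\ell)\|\bar{\Fx}_t^\ell\|_2^2+\|\bar{\Fu}_t^\ell\|_2^2$, $\breve{c}_t(\breve{\Fu}_t,\breve{\Fx}_t)=q_0\|\breve{\Fx}_t\|_2^2+\|\breve{\Fu}_t\|_2^2$, $\bar{c}_T^\ell(\bar{\Fx}_T^\ell)=\text{\normalfont poly}_{\FP_0}(\lambda_\ell)\|\bar{\Fx}_T^\ell\|_2^2$, and $\breve{c}_T(\breve{\Fx}_T)=z_0\|\breve{\Fx}_T\|_2^2$. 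I do not expect a genuine obstacle here, since the substantive content already sits in Lemma~\ref{lem:seperation-in-quadratic-functions}; the only point needing care is confirming that the control cross terms truly vanish, which is precisely where orthonormality of the $\Ff_\ell$ and the definition of $\breve{\Fu}_t$ as the orthogonal residual come in.
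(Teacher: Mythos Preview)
Your proposal is correct and follows essentially the same route as the paper, which simply says to apply Lemma~\ref{lem:seperation-in-quadratic-functions} to the cost functions. Your separate Pythagorean treatment of $\langle\Fu_t,\Fu_t\rangle$ is in fact the special case $\FQ=\I\in\mathcal{PO}(\FA)$ (i.e.\ $q_0=1$, $q_k=0$ for $k\ge 1$) of that same lemma, so the two arguments coincide.
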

\vspace{-0.3cm}
\begin{proof}
	By applying the result in Lemma \ref{lem:seperation-in-quadratic-functions} to the cost functions, we obtain the result. 
\end{proof}

\subsection{Decoupled LQR Problems}

Based on Lemma \ref{lem:decoupling-dynamics} and Lemma \ref{lem:decoupling-in-cost}, 
we can separate the LQR  problem into $(d+1)$ decoupled LQR problems:
\begin{enumerate}
	\item the eigensystem LQR problems
\begin{equation}\label{equ: eigen-direction-dyanmics}
	\begin{cases}
		\dot{\bar{\Fx}}_t^\ell = (\alpha_0 + \lambda_\ell) \bar{\Fx}^\ell_t + \text{\normalfont poly}_{\FB}(\lambda_\ell) \bar{\Fu}_t^\ell,\\[.5em]	
			\bar{J}^\ell(\bar{\Fu}^\ell) =\int_0^T \bar{c}^\ell_t(\bar{\Fu}^\ell_t,\bar{\Fx}^\ell_t) dt + \bar{c}^\ell_T(\bar{\Fx}^\ell_T), 1\leq l \leq d\\
	\end{cases}
\end{equation}
where $\bar{c}^\ell_t(\bar{\Fu}^\ell_t,\bar{\Fx}^\ell_t)= \text{\normalfont poly}_\FQ(\lambda_\ell) \|\bar{\Fx}_t^\ell\|_2^2+ \|\bar{\Fu}^\ell_t\|_2^2$ and $\bar{c}^\ell_T(\bar{\Fx}^\ell_T) = \text{\normalfont poly}_{\FP_0}(\lambda_\ell)  \|\bar{\Fx}^\ell_T\|_2^2$;\\
\item the auxiliary system LQR problem
\begin{equation} \label{equ: auxiliary-dynamics}
	\begin{cases}
		 \dot{\breve{\Fx}}_t= \alpha_0 \breve{\Fx}_t+ \beta_0 \breve{\Fu}_t,\\[.5em]
		 \breve{J}(\breve{\Fu}) =\int_0^T \breve{c}_t(\breve{\Fu}_t,\breve{\Fx}_t) dt + \breve{c}_T(\breve{\Fx}_T),\\
	\end{cases}
\end{equation}
where $\breve{c}_t(\breve{\Fu}_t,\breve{\Fx}_t)=  q_0\|{\breve{\Fx}}_t\|_2^2 +\|\breve{\Fu}_t\|_2^2$ and $\breve{c}_T(\breve{\Fx}_T) = z_0 \|\breve{\Fx}_T\|_2^2$.
\end{enumerate}

\begin{lemma} \label{lem:centralized-decoupled-problem-equivalence}
If \textbf{Assumptions \ref{ass:graphon-team-2}-\ref{ass:graphon-team-4}} are satisfied, 
then solving the optimal control problems \eqref{equ: eigen-direction-dyanmics} and \eqref{equ: auxiliary-dynamics} is equivalent to solving the original optimal control problem defined by \eqref{equ:infinite-system-model} and \eqref{equ:original-cost-function}. Moreover, the optimal control solution exists and is unique. 
\end{lemma}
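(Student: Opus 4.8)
The plan is to exploit the orthogonal decomposition $L^2_{[0,1]} = \mathcal H \oplus \mathcal H^\perp$ with $\mathcal H := \mathrm{span}\{\Ff_1,\dots,\Ff_d\}$, and to show that under this splitting the original problem \eqref{equ:infinite-system-model}--\eqref{equ:original-cost-function} factors exactly into the $d$ scalar eigensystem problems \eqref{equ: eigen-direction-dyanmics} and the auxiliary problem \eqref{equ: auxiliary-dynamics}, each of which falls under Proposition \ref{prop:existence-unique-opt-sol}. First I would set up the control-space correspondence: for $\Fu\in L^2([0,T];L^2_{[0,1]})$ write $\bar u^\ell_t=\langle\Fu_t,\Ff_\ell\rangle$ and $\breve{\Fu}_t=\Fu_t-\sum_{\ell=1}^d\bar u^\ell_t\Ff_\ell\in\mathcal H^\perp$. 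Parseval gives $\|\Fu_t\|^2=\sum_{\ell=1}^d|\bar u^\ell_t|^2+\|\breve{\Fu}_t\|^2$ for a.e.\ $t$, so by Tonelli's theorem $\Fu\mapsto(\bar u^1,\dots,\bar u^d,\breve{\Fu})$ is an isometric isomorphism of $L^2([0,T];L^2_{[0,1]})$ onto $\big(L^2([0,T];\BR)\big)^d\times L^2([0,T];\mathcal H^\perp)$; in particular the admissible control set is a product set, so the components may be chosen independently. The same splitting applied to $\Fx_0$ fixes the initial data $\bar x^\ell_0=\langle\Fx_0,\Ff_\ell\rangle$ and $\breve{\Fx}_0=\Fx_0-\sum_\ell\bar x^\ell_0\Ff_\ell$ of the subproblems.

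Next I would assemble the equivalence. By Lemma \ref{lem:decoupling-dynamics} the open-loop dynamics \eqref{equ:infinite-system-model} split into the independent equations \eqref{equ:eigen-system} and \eqref{equ:aux-system}, so that along any trajectory $\Fx_t=\breve{\Fx}_t+\sum_{\ell=1}^d\bar{\Fx}^\ell_t$ with $\bar{\Fx}^\ell$ depending only on $(\bar x^\ell_0,\bar{\Fu}^\ell)$ and $\breve{\Fx}$ only on $(\breve{\Fx}_0,\breve{\Fu})$. By Lemma \ref{lem:decoupling-in-cost} the cost is additive over these pieces, $J(\Fu)=\breve{J}(\breve{\Fu})+\sum_{\ell=1}^d\bar J^\ell(\bar{\Fu}^\ell)$, each summand depending on a single control component. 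Since the feasible set is a product set and the objective is a separable sum, minimizing $J$ over $\Fu$ is equivalent to minimizing each $\bar J^\ell$ over $\bar{\Fu}^\ell$ and $\breve{J}$ over $\breve{\Fu}$ separately; moreover $\Fu^\star$ is optimal for the original problem if and only if its components are optimal for \eqref{equ: eigen-direction-dyanmics} and \eqref{equ: auxiliary-dynamics}, with $\inf J=\breve{J}(\breve{\Fu}^\star)+\sum_\ell\bar J^\ell(\bar{\Fu}^{\ell\star})$.

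For existence and uniqueness I would invoke Proposition \ref{prop:existence-unique-opt-sol} for each subproblem. Each eigensystem problem is a scalar LQR problem with nonnegative state weight $\text{\normalfont poly}_\FQ(\lambda_\ell)=\langle\Ff_\ell,\FQ\Ff_\ell\rangle\ge0$, nonnegative terminal weight $\text{\normalfont poly}_{\FP_0}(\lambda_\ell)=\langle\Ff_\ell,\FP_0\Ff_\ell\rangle\ge0$ (from Assumption \ref{ass:graphon-team-2}, using $\FA\Ff_\ell=\lambda_\ell\Ff_\ell$), and unit control weight. The auxiliary problem is $(\BA;\BB)$ restricted to the invariant, nontrivial subspace $\mathcal H^\perp$, on which $\FA=0$ so that $\BA=\alpha_0\I$, $\BB=\beta_0\I$, $\FQ=q_0\I$, $\FP_0=z_0\I$ with $q_0,z_0\ge0$ (testing Assumption \ref{ass:graphon-team-2} against any $\Fv\in\mathcal H^\perp$); Proposition \ref{prop:existence-unique-opt-sol} again applies, giving a unique Riccati solution and a unique optimal control. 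Reassembling through the isomorphism above produces the unique optimal pair for the original problem.

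The step I expect to be the main obstacle is the second one: verifying rigorously that the control-space splitting is an isometric isomorphism onto the product space (the measurability-in-time and square-integrability bookkeeping) and that, consequently, the infimum of a separable sum over a product domain equals the sum of the infima and is attained exactly when it is attained componentwise. The dynamics and cost decouplings themselves are already supplied by Lemmas \ref{lem:decoupling-dynamics} and \ref{lem:decoupling-in-cost}, and the per-subproblem well-posedness is a direct citation of Proposition \ref{prop:existence-unique-opt-sol}.
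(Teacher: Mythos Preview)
Your proposal is correct and follows essentially the same route as the paper: invoke Lemma~\ref{lem:decoupling-dynamics} and Lemma~\ref{lem:decoupling-in-cost} to obtain $J(\Fu)=\breve J(\breve\Fu)+\sum_{\ell=1}^d\bar J^\ell(\bar\Fu^\ell)$ with nonnegative summands over a product control set, conclude that the minimum of the sum is the sum of the minima, and appeal to Proposition~\ref{prop:existence-unique-opt-sol} for existence and uniqueness. The only cosmetic difference is that the paper cites Proposition~\ref{prop:existence-unique-opt-sol} once for the original problem, whereas you verify its hypotheses for each subproblem separately; your more explicit control-space isomorphism and per-subproblem nonnegativity checks ($q_0,z_0\ge0$ and $\text{\normalfont poly}_\FQ(\lambda_\ell),\text{\normalfont poly}_{\FP_0}(\lambda_\ell)\ge0$) are sound refinements rather than a different argument.
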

\begin{proof}

Firstly, the original dynamics defined by \eqref{equ:infinite-system-model} are decoupled into dynamics of the auxiliary system and those of eigensystems. 
Secondly, the cost defined by \eqref{equ:original-cost-function} can be decoupled as 
	$J(\Fu) = \breve{J}(\breve{\Fu})+ \sum_{\ell=1}^d \bar{J}^\ell (\bar{\Fu}^\ell),$
with the summation of non-negative terms on the right hand side. Therefore $J(\Fu)$ is minimized if and only if  $\breve{J}(\breve{\Fu})$ and $\bar{J}^\ell(\bar{\Fu}^\ell), (1\leq l \leq d)$, are minimized. Hence, solving the optimal control problems \eqref{equ: eigen-direction-dyanmics} and \eqref{equ: auxiliary-dynamics} is equivalent to solving the original optimal control problem defined by \eqref{equ:infinite-system-model} and \eqref{equ:original-cost-function}. 
The existence and uniqueness of the optimal solution follow  Proposition \ref{prop:existence-unique-opt-sol}.
\end{proof}

\subsection{Centralized Optimal Solution}
\begin{theorem}
If \textbf{Assumptions \ref{ass:graphon-team-2}-\ref{ass:graphon-team-4}} are satisfied, 
then the  optimal control law for the optimal control problem in Section \ref{sec:ocp} is given by 
\begin{equation} \label{equ:centralized-optimal-control}
	\Fu_t= - \beta_0 \FL_{T-t} \breve{\Fx}_t -  \sum_{\ell=1}^d\text{\normalfont poly}_{\FB}(\lambda_\ell) {\FM}_{T-t}^{\ell}\Fx^\ell_t ,
\end{equation}
where $\FL:=\{\FL_t: t\in[0,T]\}$ is the solution to the Riccati equation 
\begin{equation} \label{equ: Riccati-Equation-Auxillary-System}
	\dot{\FL}_t = 2 \alpha_0 \FL_t - \beta^2_0 \FL_t^2 + q_0 \I , \quad \FL_0=z_0\I,
\end{equation}
and 
$\FM^{\ell} := \{\FM^{\ell}_t: t\in[0,T] \}$ is the solution to the Riccati equation 
\begin{equation}\label{equ: Riccati-Equation-Eigendirection}
	\begin{aligned}
		&\dot{\FM}^{\ell}_t = 2(\alpha_0+\lambda_\ell){\FM}^{\ell}_t- \text{\normalfont poly}_{\FB}(\lambda_\ell)^2({\FM}^{\ell}_t)^2+\text{\normalfont poly}_\FQ(\lambda_\ell)\I,\\[0.5em]
		 &{\FM}^{\ell}_0= \text{\normalfont poly}_{\FP_0}(\lambda_\ell)\I , \quad 1 \leq l \leq d .
	\end{aligned}
\end{equation}
\end{theorem}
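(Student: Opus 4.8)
The plan is to derive \eqref{equ:centralized-optimal-control} by combining Lemma~\ref{lem:centralized-decoupled-problem-equivalence} with the abstract Riccati solution of Proposition~\ref{prop:existence-unique-opt-sol}, applied separately to each of the $d+1$ decoupled subproblems.

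First I would record, via Lemma~\ref{lem:centralized-decoupled-problem-equivalence}, that under Assumptions~\ref{ass:graphon-team-2}--\ref{ass:graphon-team-4} the original problem \eqref{equ:infinite-system-model}--\eqref{equ:original-cost-function} is equivalent to the family of independent problems \eqref{equ: eigen-direction-dyanmics}, \eqref{equ: auxiliary-dynamics}: the cost splits as $J(\Fu)=\breve{J}(\breve{\Fu})+\sum_{\ell=1}^d\bar{J}^\ell(\bar{\Fu}^\ell)$ into a sum of nonnegative terms while, by Lemma~\ref{lem:decoupling-dynamics}, the auxiliary state and the $d$ eigenstates evolve independently. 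Hence the optimal control of the original problem is the superposition $\Fu_t^*=\breve{\Fu}_t^*+\sum_{\ell=1}^d\bar{\Fu}_t^{\ell*}$ of the optimal controls of the pieces, and the corresponding closed loop keeps $\breve{\Fx}_t$ in the orthogonal complement of $\{\Ff_1,\dots,\Ff_d\}$ and each $\bar{\Fx}_t^\ell$ in $\mathrm{span}\{\Ff_\ell\}$. It therefore suffices to solve each subproblem.

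Next I would note that every subproblem is an instance of the abstract LQR problem in which the generator, the input operator, the running weight and the terminal weight are all scalars (equivalently, scalar multiples of the identity on the relevant invariant subspace): the auxiliary subproblem carries data $(\alpha_0\I,\beta_0\I,q_0\I,z_0\I)$ on the complement of $\{\Ff_\ell\}_{\ell\le d}$, and the $\ell$-th eigensubproblem carries data $\big((\alpha_0+\lambda_\ell),\text{\normalfont poly}_\FB(\lambda_\ell),\text{\normalfont poly}_\FQ(\lambda_\ell),\text{\normalfont poly}_{\FP_0}(\lambda_\ell)\big)$ on $\mathrm{span}\{\Ff_\ell\}$. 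Because $\FA$, $\BB=\text{\normalfont poly}_\FB(\FA)$, $\FQ$ and $\FP_0$ all lie in $\mathcal{PO}(\FA)$, they commute and share the spectral decomposition of $\FA$, so the operator Riccati equation \eqref{equ: noncompact-Riccati-Equation} preserves this decomposition and, restricted to each summand, becomes an equation for a scalar multiple of the identity; writing $\FL_t$ for the piece on the complement and $\FM_t^\ell$ for the piece on $\mathrm{span}\{\Ff_\ell\}$ yields exactly the scalar Riccati equations \eqref{equ: Riccati-Equation-Auxillary-System} and \eqref{equ: Riccati-Equation-Eigendirection}. Unique solvability on $[0,T]$ is inherited from Proposition~\ref{prop:existence-unique-opt-sol}, once one observes that Assumption~\ref{ass:graphon-team-2} forces $q_0,z_0\ge 0$ and $\text{\normalfont poly}_\FQ(\lambda_\ell),\text{\normalfont poly}_{\FP_0}(\lambda_\ell)\ge 0$, since $\FQ$ and $\FP_0$ act as these scalars on the respective subspaces and are nonnegative.

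Finally I would apply the feedback formula \eqref{equ: noncompact-feedback-control-law} piecewise: for the auxiliary problem $\breve{\Fu}_t^*=-\beta_0\FL_{T-t}\breve{\Fx}_t$, and for the $\ell$-th eigenproblem $\bar{\Fu}_t^{\ell*}=-\text{\normalfont poly}_\FB(\lambda_\ell)\FM_{T-t}^\ell\bar{\Fx}_t^\ell$; summing these and using the superposition identity for $\Fu_t^*$ gives \eqref{equ:centralized-optimal-control}. The one genuinely nontrivial point I expect to dwell on is the reduction of the infinite-dimensional operator Riccati equation to the scalar equations, i.e. the claim that the Riccati flow leaves invariant the family of operators that are constant along the spectral decomposition of $\FA$, so that no cross terms between distinct eigendirections, or between the eigenspaces and their complement, are ever produced along the solution; once this invariance is established (it follows from the commutation of $\BA,\BB,\FQ,\FP_0$ together with the uniqueness statement of Proposition~\ref{prop:existence-unique-opt-sol}), the remainder is bookkeeping on the decoupling already set up in Lemmas~\ref{lem:decoupling-dynamics}--\ref{lem:centralized-decoupled-problem-equivalence}.
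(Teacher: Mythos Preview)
Your proposal is correct and follows essentially the same route as the paper: invoke Lemma~\ref{lem:centralized-decoupled-problem-equivalence} to reduce to the $d+1$ independent LQR problems \eqref{equ: eigen-direction-dyanmics}--\eqref{equ: auxiliary-dynamics}, solve each by the standard Riccati feedback (Proposition~\ref{prop:existence-unique-opt-sol} / \cite{bensoussan2007representation}), and then sum $\Fu_t=\breve{\Fu}_t+\sum_{\ell}\bar{\Fu}_t^\ell$ to obtain \eqref{equ:centralized-optimal-control}. The one place you elaborate beyond the paper is the invariance of the full operator Riccati flow \eqref{equ: noncompact-Riccati-Equation} under the spectral decomposition of $\FA$; this is correct, but the paper sidesteps it entirely by applying the LQR theory directly to each already-decoupled subproblem (whose data are scalar multiples of the identity by Lemmas~\ref{lem:decoupling-dynamics}--\ref{lem:decoupling-in-cost}), so the scalar form of \eqref{equ: Riccati-Equation-Auxillary-System}--\eqref{equ: Riccati-Equation-Eigendirection} falls out without ever revisiting the global Riccati equation.
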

\begin{proof}
	Since the control problems for the auxiliary system and eigensystems are decoupled, one can solve these problems independently based on the LQR controls for infinite dimensional system \cite{bensoussan2007representation}.  The optimal control laws are given by $\breve{\Fu}_t=- \beta_0 \FL_{T-t} \breve{\Fx}_t$ and ${\Fu}^\ell_t  = - \text{\normalfont poly}_{\FB}(\lambda_\ell)  {\FM}_{T-t}^{\ell} {\Fx}^\ell_t $, respectively, where $\FL:=\{\FL_t: t\in[0,T]\}$ is the solution to the Riccati equation \eqref{equ: Riccati-Equation-Auxillary-System} and $\FM^{\ell} := \{\FM^{\ell}_t: t\in[0,T] \}$ is the solution to the Riccati equation \eqref{equ: Riccati-Equation-Eigendirection}.
Furthermore, since $\Fu_t= \breve{\Fu}_t + \sum_{\ell=1}^d \bar{\Fu}^\ell_t $, we obtain \eqref{equ:centralized-optimal-control}.
\end{proof}

\subsection{Localized Optimal Solutions}

To obtain the optimal solution in a localized manner, each subsystem should solve the following optimal control problems  in all eigenfunction directions: 
\begin{equation} \label{equ:local-eigen-direction-system}
	\begin{cases}
		\dot{\bar{x}}_t^\ell = (\alpha_0 + \lambda_\ell) \bar{x}^\ell_t + \text{\normalfont poly}_{\FB}(\lambda_\ell) \bar{u}_t^\ell,\\[.5em]
		\bar{J}^\ell(\bar{u}^\ell_t)= \int_0^T \bar{c}^\ell_t(\bar{u}^\ell_t,\bar{x}^\ell_t) dt + \bar{c}^\ell_T(\bar{x}^\ell_T), 1\leq \ell \leq d, \\
%
%
	\end{cases}
\end{equation}
where  $\bar{x}_t^\ell$ and $ \bar{u}_t^\ell$ are the scalar values given in \eqref{eq:scalar-inner-product}, 
$$
\begin{aligned}
	&\bar{c}^\ell_t(\bar{u}^\ell_t,\bar{x}^\ell_t)= \text{\normalfont poly}_\FQ(\lambda_\ell) (\bar{x}_t^\ell)^2+ (\bar{u}^\ell_t)^2,\\
	&\bar{c}^\ell_T(\bar{x}^\ell_T) = \text{\normalfont poly}_{\FP_0}(\lambda_\ell)  (\bar{x}^\ell_T)^2.
\end{aligned}
$$
In addition, for the subsystem with the index $\gamma \in [\underline{\gamma}, \overline{\gamma}]\subset [0,1]$ where $\underline{\gamma}$ and $\overline{\gamma}$ are respectively the lower bound and the upper bound for the interval corresponding to subsystem $\gamma$, it should solve the following optimal control problem of the auxiliary system: 
\begin{equation} \label{equ:local-auxiliary-system}
	\begin{cases}
		 \dot{\breve{\Fx}}_t(\gamma)= \alpha_0 \breve{\Fx}_t(\gamma)+ \beta_0 \breve{\Fu}_t(\gamma),\\[.5em]
		 \TJ(\breve{\Fu}(\gamma))= \int_0^T \breve{c}_t(\breve{\Fu}_t(\gamma),\breve{\Fx}_t(\gamma)) dt + \breve{c}_T(\breve{\Fx}_T(\gamma)), \\
	\end{cases}
\end{equation}
where $$
\begin{aligned}
&\breve{c}_t\left(\breve{\Fu}_t(\gamma\right),\breve{\Fx}_t(\gamma))= q_0({\breve{\Fx}}_t(\gamma))^2 +(\breve{\Fu}_t(\gamma))^2,\\
&\breve{c}_T(\breve{\Fx}_T(\gamma)) = z_0 (\breve{\Fx}_T(\gamma))^2 .	
\end{aligned}
$$
\begin{theorem} \label{thm:team-optimal-control}
If \textbf{Assumptions \ref{ass:graphon-team-2}-\ref{ass:graphon-team-4}} are satisfied, 
then solving the optimal control problems defined by \eqref{equ:local-eigen-direction-system} and \eqref{equ:local-auxiliary-system} locally is equivalent to solving the original optimal control problem defined by \eqref{equ:infinite-system-model} and \eqref{equ:original-cost-function}.
Moreover, the localized optimal control law for the $\gamma^{th}$ subsystem with $\gamma \in [\underline{\gamma}, \overline{\gamma}]\subset [0,1]$ is given by 
\begin{equation}\label{equ:team-optimal-control-law}
	\Fu_t(\gamma)= - \beta_0 L_{T-t} \breve{\Fx}_t(\gamma) -   \sum_{\ell=1}^d \text{\normalfont poly}_{\FB}(\lambda_\ell) {M}_{T-t}^{\ell} \bar{x}^\ell_t \Ff_\ell(\gamma),
\end{equation}
where $L:=\{L_t: t\in[0,T]\}$ is the solution to the scalar Riccati equation 
\begin{equation} \label{equ:team-Ricc-aux}
	\dot{L}_t = 2 \alpha_0 L_t - \beta^2_0 L_t^2 + q_0, \quad L_0=z_0,
\end{equation}
and 
$M^{\ell} := \{M^{\ell}_t: t\in[0,T] \}$ is the solution to the scalar Riccati equation 
\begin{equation}\label{equ:team-Ricc-eigen}
	\begin{aligned}
		& \dot{M}^{\ell}_t = 2(\alpha_0+\lambda_\ell){M}^{\ell}_t- \text{\normalfont poly}_{\FB}(\lambda_\ell)^2({M}^{\ell}_t)^2+\text{\normalfont poly}_\FQ(\lambda_\ell),\\
		& {M}^{\ell}_0= \text{\normalfont poly}_{\FP_0}(\lambda_\ell) , \quad 1 \leq l \leq d .
	\end{aligned}
\end{equation}

\end{theorem}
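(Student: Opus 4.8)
The plan is to build on the centralized solution \eqref{equ:centralized-optimal-control} and to exploit the fact that every operator Riccati equation arising there has coefficients and terminal data proportional to the identity operator $\I$. First I would recall that the centralized theorem gives $\Fu_t = -\beta_0\FL_{T-t}\breve{\Fx}_t - \sum_{\ell=1}^{d}\text{poly}_\FB(\lambda_\ell)\FM^\ell_{T-t}\bar{\Fx}^\ell_t$, where $\FL$ and $\FM^\ell$ solve the operator Riccati equations \eqref{equ: Riccati-Equation-Auxillary-System} and \eqref{equ: Riccati-Equation-Eigendirection}. The crux is the \emph{reduction to scalar Riccati equations}: substituting the ansatz $\FL_t = L_t\I$ and $\FM^\ell_t = M^\ell_t\I$ into \eqref{equ: Riccati-Equation-Auxillary-System} and \eqref{equ: Riccati-Equation-Eigendirection}, and using $\I^2 = \I$, shows that these solve the operator equations exactly when $L$ and $M^\ell$ solve the scalar Riccati equations \eqref{equ:team-Ricc-aux} and \eqref{equ:team-Ricc-eigen}. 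Each scalar Riccati equation has a unique, global, nonnegative solution on $[0,T]$: the nonnegativity of $q_0$, $z_0$, $\text{poly}_\FQ(\lambda_\ell)$ and $\text{poly}_{\FP_0}(\lambda_\ell)$ follows from $\FQ,\FP_0\geq 0$ in Assumption \ref{ass:graphon-team-2}, since $\text{poly}_\FQ(\lambda_\ell) = \langle \Ff_\ell,\FQ\Ff_\ell\rangle \geq 0$ (using $\FQ\Ff_\ell = \text{poly}_\FQ(\lambda_\ell)\Ff_\ell$) and $\langle v,\FQ v\rangle = q_0\|v\|_2^2 \geq 0$ for $v$ in the (nontrivial) kernel of $\FA$, and likewise for $\FP_0$. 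Hence $L_t\I$ and $M^\ell_t\I$ are legitimate solutions of \eqref{equ: Riccati-Equation-Auxillary-System} and \eqref{equ: Riccati-Equation-Eigendirection}, and uniqueness of the operator Riccati solution (cf. Proposition \ref{prop:existence-unique-opt-sol} and \cite{bensoussan2007representation}) forces $\FL_t = L_t\I$, $\FM^\ell_t = M^\ell_t\I$. Plugging this into \eqref{equ:centralized-optimal-control} and evaluating pointwise at the index $\gamma$, with $\bar{\Fx}^\ell_t = \bar{x}^\ell_t\Ff_\ell$ so that $\bar{\Fx}^\ell_t(\gamma) = \bar{x}^\ell_t\Ff_\ell(\gamma)$, produces exactly \eqref{equ:team-optimal-control-law}.

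For the equivalence-and-localization claim I would show that the $(d+1)$ decoupled operator problems \eqref{equ: eigen-direction-dyanmics}--\eqref{equ: auxiliary-dynamics} are themselves equivalent to the scalar, pointwise-in-$\gamma$ problems \eqref{equ:local-eigen-direction-system}--\eqref{equ:local-auxiliary-system}, after which Lemma \ref{lem:centralized-decoupled-problem-equivalence} closes the argument. Each eigensystem problem lives in the one-dimensional span of $\Ff_\ell$: since $\|\bar{\Fx}^\ell_t\|_2^2 = (\bar{x}^\ell_t)^2$ and $\|\bar{\Fu}^\ell_t\|_2^2 = (\bar{u}^\ell_t)^2$ and the dynamics project to $\dot{\bar{x}}^\ell_t = (\alpha_0+\lambda_\ell)\bar{x}^\ell_t + \text{poly}_\FB(\lambda_\ell)\bar{u}^\ell_t$, the operator problem \eqref{equ: eigen-direction-dyanmics} coincides with the scalar problem \eqref{equ:local-eigen-direction-system} for the pair $(\bar{x}^\ell_t,\bar{u}^\ell_t)$, which each subsystem can solve knowing only $\lambda_\ell$ and its own projection. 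For the auxiliary system, the dynamics $\dot{\breve{\Fx}}_t(\gamma) = \alpha_0\breve{\Fx}_t(\gamma) + \beta_0\breve{\Fu}_t(\gamma)$ and the cost $q_0\|\breve{\Fx}_t\|_2^2 + \|\breve{\Fu}_t\|_2^2 = \int_0^1\big(q_0\breve{\Fx}_t(\gamma)^2 + \breve{\Fu}_t(\gamma)^2\big)\,d\gamma$ decouple completely over $\gamma\in[0,1]$, so minimizing the aggregate cost is equivalent to minimizing the pointwise cost for almost every $\gamma$, i.e. to solving \eqref{equ:local-auxiliary-system} separately at each $\gamma$; the resulting feedback $\breve{\Fu}_t(\gamma) = -\beta_0 L_{T-t}\breve{\Fx}_t(\gamma)$ is strongly measurable in $\gamma$, hence an admissible control for the operator problem \eqref{equ: auxiliary-dynamics}. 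Combining these two equivalences with Lemma \ref{lem:centralized-decoupled-problem-equivalence} shows that solving \eqref{equ:local-eigen-direction-system} and \eqref{equ:local-auxiliary-system} locally is equivalent to solving the original problem \eqref{equ:infinite-system-model}, \eqref{equ:original-cost-function}.

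I expect the main obstacle to be the first step, namely propagating the ``proportional to $\I$'' structure through the operator Riccati equations for all $t\in[0,T]$. This is not a computation but a uniqueness argument: one must check that $L_t\I$ and $M^\ell_t\I$ remain inside the class of symmetric nonnegative operators for which the operator Riccati equation has a unique solution, and that the scalar Riccati equations \eqref{equ:team-Ricc-aux}, \eqref{equ:team-Ricc-eigen} do not escape to infinity on $[0,T]$ --- both of which reduce to the nonnegativity of their zeroth-order and terminal coefficients established above. The only other point requiring care is the measurable-selection / Fubini step upgrading pointwise optimality of the auxiliary control to optimality of the aggregate cost, but this is routine given the complete decoupling of the auxiliary dynamics and cost over $\gamma$.
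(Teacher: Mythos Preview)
Your proposal is correct, and for the equivalence claim it is essentially identical to the paper's argument: the paper also reduces each eigensystem problem \eqref{equ: eigen-direction-dyanmics} to the scalar problem \eqref{equ:local-eigen-direction-system} via $\bar{\Fx}^\ell_t=\bar{x}^\ell_t\Ff_\ell$ and $\|\Ff_\ell\|_2=1$, reduces the auxiliary problem \eqref{equ: auxiliary-dynamics} to \eqref{equ:local-auxiliary-system} by writing $\breve{J}(\breve{\Fu})=\int_0^1 \breve{J}(\breve{\Fu}(\gamma))\,d\gamma$ with nonnegative integrand, and then invokes Lemma \ref{lem:centralized-decoupled-problem-equivalence}.

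The one genuine difference is in how you obtain the feedback law \eqref{equ:team-optimal-control-law}. The paper does \emph{not} go through Theorem~1 and the operator Riccati equations \eqref{equ: Riccati-Equation-Auxillary-System}--\eqref{equ: Riccati-Equation-Eigendirection}; instead, once the scalar problems \eqref{equ:local-eigen-direction-system} and \eqref{equ:local-auxiliary-system} are in hand, it applies standard finite-dimensional LQR theory directly to each of them, reading off \eqref{equ:team-Ricc-aux}--\eqref{equ:team-Ricc-eigen} and the feedbacks $\breve{\Fu}_t(\gamma)=-\beta_0 L_{T-t}\breve{\Fx}_t(\gamma)$, $\bar{u}^\ell_t=-\text{poly}_\FB(\lambda_\ell)M^\ell_{T-t}\bar{x}^\ell_t$, and then reassembles $\Fu_t(\gamma)=\breve{\Fu}_t(\gamma)+\sum_\ell \bar{u}^\ell_t\Ff_\ell(\gamma)$. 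Your route---substituting the ansatz $\FL_t=L_t\I$, $\FM^\ell_t=M^\ell_t\I$ into the operator Riccati equations and invoking uniqueness from Proposition~\ref{prop:existence-unique-opt-sol}---is a valid alternative and has the merit of making explicit why the scalar Riccati equations are globally well-posed (your verification that $q_0,z_0,\text{poly}_\FQ(\lambda_\ell),\text{poly}_{\FP_0}(\lambda_\ell)\geq 0$ is a detail the paper leaves implicit). The paper's route is shorter and avoids any dependence on Theorem~1, making Theorem~\ref{thm:team-optimal-control} logically self-contained once the decoupling lemmas are in place.
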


\begin{proof}
First, since $\bar{\Fx}_t^\ell = \bar{x}_t^\ell \Ff_\ell$, $\bar{\Fu}_t^\ell = \bar{u}_t^\ell \Ff_\ell$ and $\|\Ff_\ell\|_2 =1$ for $1\leq l \leq d$, 
 the optimal control problem in \eqref{equ: eigen-direction-dyanmics} can be equivalently solved by solving 
\eqref{equ:local-eigen-direction-system} and then recovering the pair $(\bar{\Fx}_t^\ell, \bar{\Fu}_t^\ell)$ in the $\Ff_\ell$ eigendirection, $1\leq l \leq d$. 
Second, notice that $J(\breve{\Fu}) = \int_0^1 J(\breve{\Fu}_t(\gamma)) d\gamma$ and $J(\breve{\Fu}(\gamma))$ are non-negative for any $\gamma \in [\underline{\gamma}, \overline{\gamma}]\subset[0,1]$. 
Therefore the optimal control problems \eqref{equ: auxiliary-dynamics} and  \eqref{equ:local-auxiliary-system} are equivalent.  
These, together with the result in Lemma \ref{lem:centralized-decoupled-problem-equivalence}, imply that solving the optimal control problems defined by \eqref{equ:local-eigen-direction-system} and \eqref{equ:local-auxiliary-system} locally is equivalent to solving the original optimal control problem defined by \eqref{equ:infinite-system-model} and \eqref{equ:original-cost-function}.

It is obvious that \eqref{equ:team-Ricc-aux} and \eqref{equ:team-Ricc-eigen} are the Riccati equations for the LQR problems  in \eqref{equ:local-auxiliary-system} and \eqref{equ:local-eigen-direction-system}, respectively.  Based on the standard LQR theory, the optimal control laws are respectively given by 
	\begin{equation*}
	\begin{aligned}
		&\breve{\Fu}_t(\gamma)  =- \beta_0 L_{T-t} \breve{\Fx}_t(\gamma),
&\bar{u}^\ell_t  = - \text{\normalfont poly}_{\FB}(\lambda_\ell)  {M}_{T-t}^{\ell} \bar{x}^\ell_t.\label{eq:l-dir-ctrl}
	\end{aligned}
	\end{equation*}	
Furthermore, since $\Fu_t(\gamma)= \breve{\Fu}_t(\gamma) + \sum_{\ell=1}^d \bar{\Fu}^\ell_t (\gamma)= \breve{\Fu}_t(\gamma) + \sum_{\ell=1}^d \bar{u}^\ell_t \Ff_\ell(\gamma)$, we obtain the localized optimal control law in \eqref{equ:team-optimal-control-law} for the original problem defined by \eqref{equ:infinite-system-model} and \eqref{equ:original-cost-function}. 
\end{proof}

The optimal control \eqref{equ:team-optimal-control-law} consists of a single auxiliary component and $d$ eigendirection components. The eigenstates $x^\ell_t=\langle \Fx, \Ff_\ell \rangle, 1\leq \ell \leq d,$ may be viewed as the global weighted aggregates of states;  the auxiliary state $\breve \Fx_t(\gamma)= \Fx_t(\gamma) - \sum_{\ell=1}^d x_t^\ell \Ff_\ell(\gamma)$ may be viewed as the local state offset from the global state aggregates. %

\subsection{Information Structure and Complexity}

The following information is required by a representative subsystem $\gamma \in [\underline{\gamma}, \overline{\gamma}]\subset [0,1]$ to generate the localized optimal solution: 
\begin{enumerate}
	\item all the eigenvalues of $\FA$ and the value of the respective eigenfunctions at its index location,  that is, $\lambda_\ell$,  $\Ff_\ell(\gamma)$ for all $1\leq \ell \leq d$;
	\item 
	the projections of the state $\Fx_t$ onto each eigenfunction direction, that is, $\bar{x}^\ell_t=\langle \Fx_t, \Ff_\ell\rangle$ for all $1\leq \ell \leq d$;
	\item its own state $\Fx_t(\gamma)$.
\end{enumerate}
Alternatively, 2) can be replaced by the projections of the initial state $\Fx_0$ onto each eigenfunction direction, that is, $\bar{x}^\ell_0=\langle \Fx_0, \Ff_\ell\rangle$ for all $1\leq \ell \leq d$. Given $\bar{x}^\ell_0$, each subsystem can locally precompute the state  $\{\bar{x}^\ell_t, t\in(0,T]\}$ 
 based on the dynamics in \eqref{equ:local-eigen-direction-system} and the optimal control law in the $\Ff_\ell$ eigendirection given by \eqref{eq:l-dir-ctrl}.

The complexity of generating the optimal control law for each subsystem involves solving the scalar Riccati equation corresponding to auxiliary state dynamics and solving $d$ number of scalar Riccati equations corresponding to $d$ eigenfunction directions.

It is worth mentioning that although in the graphon dynamical system there are in general an infinite number of subsystems, each subsystem can still generate the localized optimal solution by solving $(d+1)$ scalar Riccati equations.
If the underlying graphon is an uniform graphon $\FA(x,y) =1$ for all $x,y \in [0,1]$, which gives $d=1$, $\Ff_1= \mathbf{1} \in L^2_{[0,1]}$ and $\lambda_1 =1$, then the LQR problem  with graphon coupling reduces to the LQR problem with mean-field coupling. The corresponding solution  involves solving only two decoupled Riccati equations. 

\section{Suboptimal Solution via Spectral Approximations} \label{sec:graphon-approx}
If \textbf{Assumption \ref{ass:graphon-team-3}} does not hold, that is, $\FA$ contains an infinite number of eigenfunctions corresponding to the non-zero eigenvalues, then one needs to find the approximate solution.
Since for a graphon $\FA \in \GSOT$, we have $\|\FA\|_2 < \infty$ and  hence the operator $\FA$ is a compact operator according to \cite[Chapter 2, Proposition 4.7]{conway2013course}.  Therefore it has a countable spectral decomposition
\begin{equation} \label{equ: graphon-infinite-spectral-sum}
	\FA(x,y) = \sum_{i=1}^{\infty} \lambda_\ell \Ff_\ell(x) \Ff_\ell(y), \quad (x,y)\in[0,1]^2,
\end{equation}
where the convergence is in the $L^2_{[0,1]^2}$ sense, $\{\lambda_1, \lambda_2,....\}$  is the set of eigenvalues (which are not necessarily distinct) with decreasing absolute values, and $\{\Ff_1, \Ff_2,...\}$ represents the set of the corresponding orthonormal eigenfunctions (i.e. $\|\Ff_\ell\|_2=1$, and $\langle \Ff_\ell, \Ff_k\rangle =0$ if $l\neq k$). 
The only accumulation point of the eigenvalues is zero \cite{lovasz2012large}, that is, $\lim_{\ell\rightarrow \infty} \lambda_\ell =0.$ 
This implies that the compact operator $\FA$  can be approximated by a finite truncation of the spectral decomposition with the most significant eigenvalues. %

Since the centralized solution and the localized optimal solution are essentially the same, we focus only on the approximation result for the localized optimal solution.  
Consider a graphon $\FA$ with the spectral decomposition in \eqref{equ: graphon-infinite-spectral-sum} and we approximate it by
\begin{equation}\label{equ:approximation-graphon}
	\FA_L(x,y):=\sum_{\ell=1}^L \lambda_\ell \Ff_\ell(x) \Ff_\ell(y), \quad \forall (x,y) \in [0,1]^2.
\end{equation}
Since $\FA \in L^2_{[0,1]^2}$, it is obvious that $\FA_L \in L^2_{[0,1]^2}$ is a bounded operator and $\lim_{L\rightarrow \infty} \FA_L = \FA.$ 
The corresponding auxiliary state and the auxiliary control are given by
\begin{equation}
	\breve{\Fx}_t= \Fx_t -\sum_{\ell=1}^{L} \bar{\Fx}_t^\ell \quad \text{and} \quad \breve{\Fu}_t= \Fu_t -\sum_{\ell=1}^{L} \bar{\Fu}_t^\ell.
\end{equation}

Consider implementing  the localized optimal control law in \eqref{equ:team-optimal-control-law}  with the approximation $\FA_L$ of $\FA$. 
In this implementation, any eigendirection corresponding to $h>L$ is ignored in the spectral approximation and the control law applied in the $h^{th}$  eigensystem $(h>L)$ is given by the auxiliary control law.  
  Now the optimal feedback gain $M^{h}:=\{M^h_t, t\in [0,T]\}$ in any eigendirection corresponding to $h>L$ is based on the following scalar Riccati equation
\begin{equation*}
	\begin{aligned}
		& \dot{M}^{h}_t = 2(\alpha_0+\lambda_h){M}^{h}_t- \text{\normalfont poly}_{\FB}(\lambda_h)^2({M}^{h}_t)^2+\text{\normalfont poly}_\FQ(\lambda_h),\\
		& {M}^{h}_0= \text{\normalfont poly}_{\FP_0}(\lambda_h)
	\end{aligned}
\end{equation*}
In the approximation scheme, this is reproduced by the feedback gain $\widetilde M^{h}:=\{\widetilde M^{h}_t, t\in[0,T]\}$ given by the following scalar Riccati equation
\begin{equation}
	\begin{aligned}
		& \dot{\widetilde M^h_t} = 2\alpha_0{\widetilde M^{h}_t}- (\beta_0\widetilde M^{h}_t)^2+q_0,\quad\widetilde M^{h}_0 = z_0,
	\end{aligned}
\end{equation}
which is based on \eqref{equ:team-Ricc-aux}.

For simplicity of discussion, let $\text{poly}_\FB(\FA)= \beta_0 \I$.
Then the closed loop system in the $h^{th}$ eigendirection $(\text{for any } h>L)$  under the optimal control is given by 
\begin{equation}
		\dot{\bar{x}}_t^h = (\alpha_0 + \lambda_h - \beta_0^2 M_{T-t}^h) \bar{x}^h_t 
\end{equation}
with the solution 
	$\bar{x}^h_t= \text{exp} \Big( \int_0^T (\alpha_0 + \lambda_h - \beta_0^2 M_{T-t}^h)dt\Big) \bar{x}^h_0.$

If all subsystems implement the observation of eigenstates $\{x^\ell\}_{\ell=1}^L$,
 then the closed loop system in the $h^{th}$ eigendirection  under the approximate control is given by 
\begin{equation}
		\dot{\widetilde{x}}_t^h = (\alpha_0 + \lambda_h - \beta_0^2 \widetilde M^{h}_{T-t}) \widetilde{x}^h_t 
\end{equation}
with the solution 
	$\widetilde{x}^h_t= \text{exp} \Big( \int_0^T (\alpha_0 + \lambda_h - \beta_0^2 \widetilde M_{T-t}^h)dt\Big) \widetilde{x}^h_0.$

Note that ${x}^h_0= \widetilde{x}^h_0$. Therefore, 
\[
	\frac{\widetilde x_t^h}{\bar x_t^h} = \text{exp} \Big(- \beta_0^2\int_0^T (\widetilde M_{t}^h-  M_{t}^h)dt \Big). 
\]
This, together with the explicit solutions to Riccati equations (see Appendix \ref{apx:riccati-explicit-sol}), leads to the following proposition.

\begin{proposition}
	Assume 
	$\textup{poly}_\FB(\FA)= \beta_0 \I$. If the localized optimal control law \eqref{equ:team-optimal-control-law} is applied with the approximation of $\FA$ by $\FA_L$ given in \eqref{equ:approximation-graphon} and all subsystems implement the real-time observation of eigenstates $\{x^\ell\}_{\ell=1}^L,$
	then 
\[
	\frac{\widetilde x_t^h}{\bar x_t^h} = \text{\normalfont exp} \Big(- \beta_0^2\int_0^T (\widetilde M_{t}^h-  M_{t}^h)dt \Big),\quad \forall h>L. 
\]
Furthermore, if $\widetilde M_{t}^h-\widetilde P^{h}\neq 0$ and $ M_{t}^h- P^{h}\neq 0$ where $\widetilde P^{h}= \sqrt{\frac{\alpha_0^2}{\beta_0^4}+ \frac{q_0}{\beta_0^2} } + \frac{\alpha_0}{\beta_0^2}$ and $P^{h}=\sqrt{\frac{(\alpha_0+\lambda_h)^2}{\beta_0^4}+ \frac{\text{\normalfont poly}_\FQ(\lambda_h)}{\beta_0^2}  } + \frac{(\alpha_0+ \lambda_h)}{\beta_0^2}$,
 then $\widetilde M_{t}^h$ and $ M_{t}^h$ are explicitly given by 
 \begin{equation*}
\begin{aligned}
	\widetilde M_{t}^h = \left[\frac{e^{^{-2(\alpha_0 - \beta_0^2\widetilde P^h)t}}}{z_0-\widetilde P^h}+ \beta_0^2\int_0^t e^{^{-2(\alpha_0 - \beta_0^2\widetilde P^h)\tau}}d\tau\right]^{^{-1}}+ \widetilde P^h,
\end{aligned}
\end{equation*}
and 
\begin{equation*}
\begin{aligned}
	 &M_{t}^h =\\
	  & \left[\frac{e^{^{-2(\alpha_0 +\lambda_h- \beta_0^2 P^h)t}}}{{\text{\normalfont poly}_{_{\FP_0}}(\lambda_h)- P^h}}+ \beta_0^2 \int_0^t  e^{^{-2(\alpha_0 +\lambda_h- \beta_0^2 P^h)\tau}} d\tau\right]^{^{-1}}+  P^h.\\
\end{aligned}
\end{equation*}
\end{proposition}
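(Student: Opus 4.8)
The first identity is obtained by collecting the displayed computation that precedes the statement. Under $\text{\normalfont poly}_\FB(\FA)=\beta_0\I$, Theorem~\ref{thm:team-optimal-control} gives, in each eigendirection $h>L$, the true closed loop $\dot{\bar x}^h_t=(\alpha_0+\lambda_h-\beta_0^2 M^h_{T-t})\bar x^h_t$, whereas in the $L$-truncated implementation the $h$-th direction is driven by the auxiliary feedback gain, so $\dot{\widetilde x}^h_t=(\alpha_0+\lambda_h-\beta_0^2\widetilde M^h_{T-t})\widetilde x^h_t$. Solving both scalar linear equations, using $\bar x^h_0=\widetilde x^h_0$, cancelling the common factor carrying $\alpha_0+\lambda_h$, and substituting $t\mapsto T-t$ in the remaining integral yields $\widetilde x^h_t/\bar x^h_t=\exp\!\big(-\beta_0^2\int_0^T(\widetilde M^h_t-M^h_t)\,dt\big)$, which is the first claim.

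For the explicit formulas I would treat $M^h$ and $\widetilde M^h$ as two instances of the generic scalar Riccati ODE $\dot y_t=-b\,y_t^2+2a\,y_t+c$ with $b=\beta_0^2$: for $M^h$ take $a=\alpha_0+\lambda_h$, $c=\text{\normalfont poly}_\FQ(\lambda_h)$, $y_0=\text{\normalfont poly}_{\FP_0}(\lambda_h)$, and for $\widetilde M^h$ take $a=\alpha_0$, $c=q_0$, $y_0=z_0$. First I would locate the equilibrium: the quadratic $-b\,P^2+2a\,P+c=0$ has the root $P=\tfrac{a}{b}+\sqrt{\tfrac{a^2}{b^2}+\tfrac{c}{b}}$, which specializes to $P^h$ and to $\widetilde P^h$ respectively. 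Then I would shift, $N_t:=y_t-P$; since $P$ annihilates the constant-and-quadratic part, $N$ solves the Bernoulli equation $\dot N_t=2(a-bP)N_t-b\,N_t^2$. On the interval $[0,T]$ where $N_t\neq 0$ --- which is exactly the hypothesis $M^h_t-P^h\neq 0$, resp.\ $\widetilde M^h_t-\widetilde P^h\neq 0$ --- the reciprocal $w_t:=1/N_t$ converts this into the linear first-order ODE $\dot w_t=-2(a-bP)w_t+b$, which I would integrate with the factor $e^{2(a-bP)t}$ and the initial value $w_0=1/(y_0-P)$ to obtain $w_t=\tfrac{e^{-2(a-bP)t}}{y_0-P}+b\int_0^t e^{-2(a-bP)\tau}\,d\tau$, using $e^{-\nu t}\int_0^t e^{\nu\tau}\,d\tau=\int_0^t e^{-\nu\tau}\,d\tau$. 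Inverting, $y_t=P+1/w_t$, then reproduces both displayed expressions after substituting $a,b,c,P,y_0$.

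I do not expect a genuine obstacle: this is the standard ``shift to an equilibrium, then take reciprocals'' reduction of a scalar Riccati equation. The points that need care, and where the hypotheses enter, are (i) the change of variable $t\mapsto T-t$ linking the feedback gains $M^h_{T-t}$ used in the closed loop to the forward-time Riccati solutions $M^h_t$; (ii) checking that the ``$+$'' root is the one consistent with the prescribed initial data and with $\beta_0\neq 0$, so that $P^h,\widetilde P^h$ are real and the denominators $y_0-P$ carry the right sign; and (iii) keeping the non-degeneracy conditions $M^h_t-P^h\neq 0$ and $\widetilde M^h_t-\widetilde P^h\neq 0$ in force throughout $[0,T]$ so that $w_t=1/N_t$ is a legitimate substitution. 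Beyond these bookkeeping points the computation is routine.
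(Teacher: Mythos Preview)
Your proposal is correct and mirrors the paper's own argument: the ratio formula is exactly the displayed computation preceding the proposition (closed-loop scalar ODEs in the $h$-th eigendirection under the optimal versus the auxiliary gain, common initial value, then divide), and the explicit Riccati formulas are obtained in the paper's Appendix~\ref{apx:riccati-explicit-sol} by the same shift-to-equilibrium $\Pi^e_t=\Pi_t-S$ followed by the reciprocal substitution $I^e_t=(\Pi^e_t)^{-1}$ that you describe. There is no methodological difference to report.
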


\section{Discussion}
We limit our discussions to graphons $\FA \in \GSOT$ purely for simplicity.  The generalization to functions $\FA_c\in \GST_c$ is immediate.
Following the solution approach, the generalization will only result in the difference in the magnitude of eigenvalues in the spectral decomposition. 
Note that the corresponding $L^2[0,1]$ operator generated by $\FA_c \in \GST_c$ is a compact operator \cite[Chapter 2, Proposition 4.7]{conway2013course} and the approximation \eqref{equ:approximation-graphon} for the suboptimal solution also holds.

The idea of decoupling in generating the optimal control law is inspired by \cite{arabneydi2015team,arabneydi2016team}. The coupling in this paper takes into account the local network weights and hence is more general than (weighted or unweighted) mean-field coupling in \cite{arabneydi2015team,arabneydi2016team}, and  the spectral decomposition of graphons is further required in the decoupling in this paper.

\section{Example}\label{sec:numerical-example}
Consider the example with the following parameters: 
$\alpha_0 = 2$, $\text{poly}_\FB (s) = 1+\frac12 s$, $\text{poly}_\FQ(s)= (1-s)^2$, $\text{poly}_{\FP_0}(s)= (1-s)^2$. Consider a sinusoidal graphon $\FA$ given by
\begin{equation}\label{eq:sinusoidal-graphon}
	\FA (x,y) = \cos(2\pi(x-y)),\quad  \forall (x,y) \in [0,1]^2.
\end{equation}
Note that $\FA$ has two eigenfunctions $\Ff_1=\sqrt{2}\sin 2\pi(\cdot)$ and $\Ff_2=\sqrt{2}\cos 2\pi(\cdot)$ corresponding to the only nonzero eigenvalue $ \lambda_1=\lambda_2=\frac12$. Evidently, \textbf{Assumptions 1-4} are satisfied. 

The auxiliary system and the auxiliary cost for subsystem $\gamma \in [\underline{\gamma}, \overline{\gamma}]\subset [0,1]$~are respectively given by 
\begin{align*}
	&\dot{\breve \Fx}_t(\gamma) = 2\breve \Fx_t(\gamma) + \breve \Fu_t(\gamma),\\
	&\breve J(\Fu) = \int_0^T \big(\Fx^2_t(\gamma)+ \Fu^2_t(\gamma)\big) dt + \Fx^2_T(\gamma).	
\end{align*}
The dynamics and cost for the $\ell^{th}$ eigensystem, $\ell \in \{1,2\}$, are respectively given by
\begin{align*}
	&\dot{\bar x}_t^\ell = \frac52\bar x_t^\ell +   \frac54 \bar u_t^\ell,\\
	&\bar{J}^\ell (\bar u^\ell) = \int_0^T \big(\frac14  (\bar x_t^\ell)^2 + (\bar u_t^\ell)^2\big) dt + \frac14 (\bar x_T^\ell)^2 . 
\end{align*}
Following Theorem \ref{thm:team-optimal-control}, the localized optimal control problem for the system \eqref{equ:infinite-system-model} with the cost \eqref{equ:original-cost-function} for a subsystem $\gamma \in [\underline{\gamma}, \overline{\gamma}]\subset [0,1]$ is given as follows: 
\begin{multline}\label{eq:num-sol}
	\Fu_t(\gamma)= - L_{T-t} \breve{\Fx}_t(\gamma) \\-   \frac{5\sqrt{2}}4 \Big({M}_{T-t}^{(1)} \bar{x}^{(1)}_t \sin2\pi\gamma + {M}_{T-t}^{(2)}\bar{x}^{(2)}_t \cos2\pi\gamma\Big),
\end{multline}
where  
$$\breve{\Fx}_t(\gamma) = {\Fx}_t(\gamma) - \sqrt{2} \bar{x}^{(1)}_t \sin2\pi \gamma -\sqrt{2} \bar{x}^{(2)}_t \cos2\pi \gamma,$$~
\begin{equation*}
\begin{aligned}
	&\dot{L}_t = 4 L_t -  L_t^2 + 1, \quad L_0=1, \\
	& \dot{M}^{\ell}_t = 5{M}^{\ell}_t- (\frac54 {M}^{\ell}_t)^2+\frac14,\quad
		 {M}^{\ell}_0= \frac14 , \quad \ell \in\{1,2\},
\end{aligned}
\end{equation*}
with $t\in[0,T]$. 

\begin{figure}[htb]
\centering
	\includegraphics[width=8.5cm]{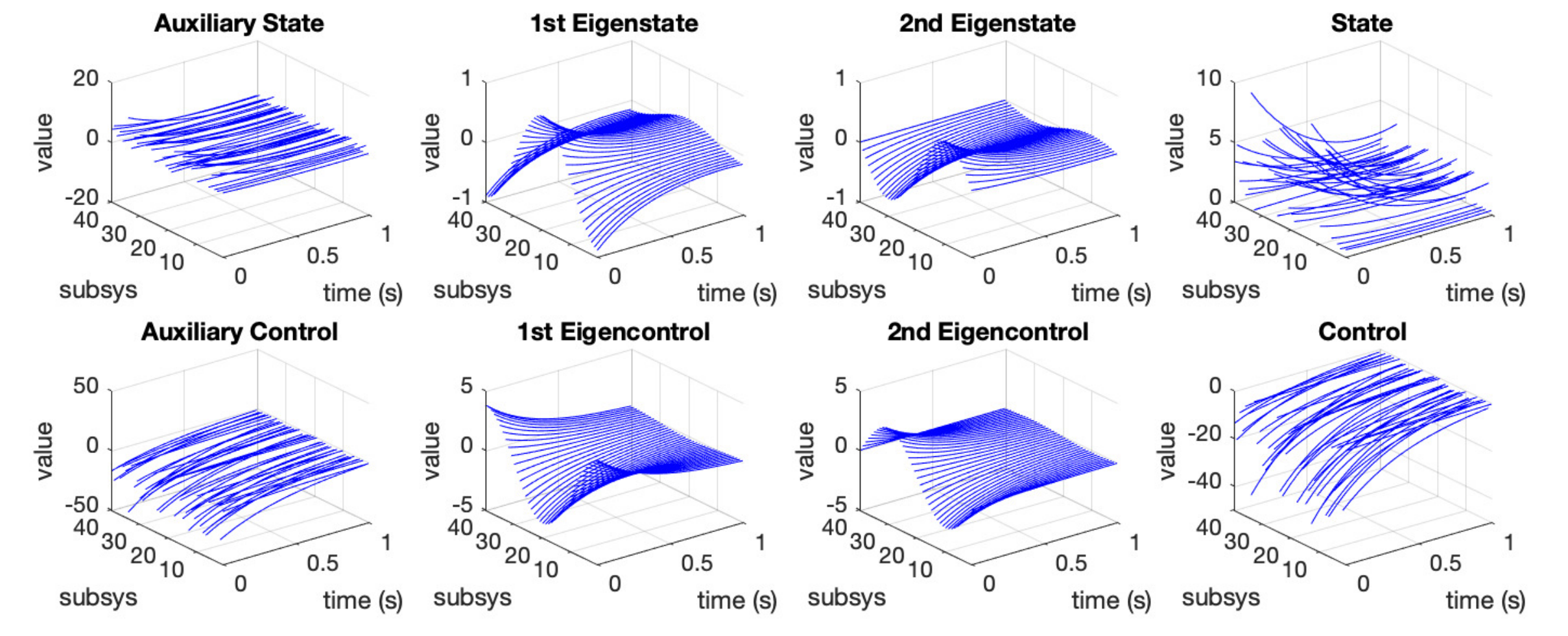}
	\caption{This is a simulation demonstration for the example in Section \ref{sec:numerical-example}. The simulation runs on the step function system corresponding to the sinusoidal graphon \eqref{eq:sinusoidal-graphon} based on the uniform partition of size 40. 
	  Note that the step function system represents a network system consisting of 40 nodal subsystems where each subsystem is indexed by an interval of length $\frac1{40}$ in $[0,1]$.
	Each subsystem locally generates it control input according to \eqref{eq:num-sol}, which requires solving only two scalar Riccati equations. The initial states are generated randomly.}  
\end{figure}

\section{Acknowledgment}
The authors would like to thank Prof. Aditya Mahajan and the reviewers for constructive comments and suggestions. 

\bibliographystyle{IEEEtran}
\bibliography{/Users/smartgao/Dropbox/ImportantBib/IEEEabrv,/Users/smartgao/Dropbox/ImportantBib/mybib-abrv}

\appendices
\section{Proof of Lemma \ref{lem:seperation-in-quadratic-functions}} \label{apx:proof-lemma-1}

\begin{proof}
	First, we show for any $k\geq0 $
	\begin{equation} \label{equ:decoupling-cost-power-function}
				\begin{aligned}
						\langle \Fx_t, \FA^k \Fx_t \rangle 
						=&\langle  \breve{\Fx}_t, \FA^k \breve{\Fx}_t \rangle+  \sum_{\ell=1}^d \langle  {\bar{\Fx}}^\ell_t, \FA^k \bar{\Fx}^\ell_t\rangle.	\\
				\end{aligned}
	\end{equation}

By decomposing the left hand side of \eqref{equ:decoupling-cost-power-function}, we have
		\begin{equation} \label{equ:decoupling-proof-seperation}
					\begin{aligned}
			\langle \Fx_t, \FA^k \Fx_t \rangle 				=& \langle  \breve{\Fx}_t, \FA^k \breve{\Fx}_t \rangle + \langle \sum_{\ell=1}^d \bar{\Fx}_t^\ell, \FA^k \sum_{\ell=1}^d \bar{\Fx}_t^\ell\rangle\\
							&+ 2 \langle \breve{\Fx}_t, \FA^k \sum_{\ell=1}^d \bar{\Fx}_t^\ell \rangle.		\\
					\end{aligned}
		\end{equation}

Further, the second term on the right hand side of \eqref{equ:decoupling-proof-seperation} gives  
\begin{equation*}
	\begin{aligned}
		&\left\langle \sum_{\ell=1}^d \bar{\Fx}_t^\ell, \FA^k \sum_{\ell=1}^d \bar{\Fx}_t^\ell\right\rangle =
		\left\langle \sum_{\ell=1}^d \langle{\Fx}_t, \Ff_\ell\rangle \Ff_\ell, \FA^k \sum_{\ell=1}^d \langle{\Fx}_t, \Ff_\ell\rangle \Ff_\ell \right\rangle\\
		& = \sum_{\ell=1}^d \left\langle  \langle{\Fx}_t, \Ff_\ell\rangle \Ff_\ell,\lambda_\ell^k \langle{\Fx}_t, \Ff_\ell\rangle \Ff_\ell \right\rangle 
		 = \sum_{\ell=1}^d \langle  {\bar{\Fx}}^\ell_t, \FA^k \bar{\Fx}^\ell_t\rangle.	
	\end{aligned}
\end{equation*}
and the last term on the right hand side of \eqref{equ:decoupling-proof-seperation} gives
\begin{equation*}
	\begin{aligned}
		&\left\langle \breve{\Fx}_t, \FA^k \sum_{\ell=1}^d \bar{\Fx}_t^\ell\right\rangle =
		\left\langle {\Fx}_t- \sum_{\ell=1}^d \bar{\Fx}_t^\ell,  \FA^k \sum_{\ell=1}^d \bar{\Fx}_t^\ell \right \rangle\\
		& =
		\left\langle {\Fx}_t- \sum_{\ell=1}^d \langle{\Fx}_t, \Ff_\ell\rangle \Ff_\ell,  \FA^k \sum_{\ell=1}^d \langle{\Fx}_t, \Ff_\ell\rangle \Ff_\ell \right \rangle\\
		 &=  \sum_{\ell=1}^d \lambda_\ell^k \langle{\Fx}_t,  \Ff_\ell\rangle^2  -\sum_{\ell=1}^d  \lambda_\ell^k \langle{\Fx}_t, \Ff_\ell\rangle^2 \|\Ff_\ell\|_2^2
	 =0 .
	\end{aligned}
\end{equation*}
Hence, we obtain \eqref{equ:decoupling-cost-power-function}.  Since this separation result holds for all powers of $\FA$, we have 
			\begin{equation*} 
				\begin{aligned}
						\langle \Fx_t, \text{\normalfont poly}(\FA) \Fx_t \rangle 
						=&\langle  \breve{\Fx}_t, \text{\normalfont poly}(\FA) \breve{\Fx}_t \rangle+  \sum_{\ell=1}^d \langle  {\bar{\Fx}}^\ell_t, 
						\text{\normalfont poly}(\FA) \bar{\Fx}^\ell\rangle	\\
					\end{aligned}
			\end{equation*}
With $\FQ=\text{\normalfont poly}(\FA)$, we prove the result in \eqref{equ:cost-seperation}. 
Furthermore, 
\begin{equation}
	\begin{aligned}
		\langle  {\bar{\Fx}}^\ell_t, \text{\normalfont poly}(\FA) \bar{\Fx}^\ell\rangle 
		&	= \Big \langle  \langle \Fx_t, \Ff_\ell \rangle \Ff_\ell, \text{\normalfont poly}(\lambda_\ell) \langle \Fx_t,\Ff_\ell  \rangle \Ff_\ell \Big \rangle\\
		& = \text{\normalfont poly}(\lambda_\ell) \|\bar{\Fx}_t^\ell\|_2^2		
	\end{aligned}	
\end{equation}
and 
\begin{equation}
	\begin{aligned}
		\langle  \breve{\Fx}_t, \FQ \breve{\Fx}_t \rangle & = \langle  \breve{\Fx}_t, q_0 \breve{\Fx}_t \rangle + \left\langle  \breve{\Fx}_t, \sum_{k=1}^h q_k\FA^k \breve{\Fx}_t \right\rangle \\
		&= \langle  \breve{\Fx}_t, q_0 \breve{\Fx}_t \rangle = q_0 \|\breve{\Fx}_t\|_2^2.
	\end{aligned}
\end{equation}
Therefore, we have the result in \eqref{equ:cost-seperation-eigenvalues}.
\end{proof}

\section{Explicit Solutions to Scalar Riccati Equations} \label{apx:riccati-explicit-sol}
Consider the following scalar Riccati equation:
\begin{equation}\label{equ:differential-Ricc}
	\begin{aligned}
		& \dot{\Pi}_t= 2\alpha \Pi_t- \beta^2 \Pi^2_t+q,\quad \Pi_0 = z_0>0, \quad q>0.  
	\end{aligned}
\end{equation}
Let $S$ be the positive solution to the corresponding algebraic Riccati equation:
\begin{equation} \label{equ:algebraic-Ricc}
	\begin{aligned}
		0 = 2\alpha S- \beta^2 S^2+q, \quad q>0.  
	\end{aligned}
\end{equation}
Denote $\Pi^e_t = \Pi_t -S $.
Subtracting \eqref{equ:algebraic-Ricc} from \eqref{equ:differential-Ricc} yields 
\begin{equation}\label{equ:error-Ricc}
	\begin{aligned}
		 &\dot{\Pi}^e_t= 2\alpha \Pi^e_t- \beta^2 \Pi^2_t + \beta^2 S^2 \\
		 &= 2\alpha \Pi^e_t- 2\beta^2S(\Pi_t-S)- \beta^2 \Pi^2_t + \beta^2 S^2 + 2\beta^2S(\Pi_t-S)\\
		& =2(\alpha-\beta^2 S) \Pi^e_t- \beta^2(\Pi^e_t)^2,\quad \Pi^e_0 = z_0- S, \quad q>0.  
	\end{aligned}
\end{equation}
If $\Pi^e_t \neq 0$ holds for all $t \in [0,T]$, we introduce $I^{e}_t = (\Pi^e_t)^{-1}$. Substituting $(I^{e}_t)^{-1}$ for  $\Pi^e_t$ in \eqref{equ:error-Ricc}  yields
\begin{equation}
	\begin{aligned}
		 &\dot{I}^e_t =-2(\alpha-\beta^2 S) I^e_t+ \beta^2,\quad I^e_0 = (z_0- S)^{-1}.  
	\end{aligned}
\end{equation} See also \cite{riccatiExplicitSol1998}.
Therefore $$I_t^e = \frac{e^{-2(\alpha-\beta^2S)t}}{(z_0-S)} + \beta^2\int_0^t e^{-2(\alpha-\beta^2S)\tau}d\tau $$ and hence 
\[
	\Pi_t = \left(\frac{e^{-2(\alpha-\beta^2S)t}}{(z_0-S)} + \beta^2\int_0^t e^{-2(\alpha-\beta^2S)\tau}d\tau\right)^{-1} + S
\]
with $S = \sqrt{\frac{\alpha^2}{\beta^4}+ \frac{q}{\beta^2} } + \frac{\alpha}{\beta^2}$.

\end{document}